\documentclass[10pt,reqno,letter]{amsart}
\usepackage[english]{babel} 
\usepackage{anysize}
\marginsize{4cm}{4cm}{3cm}{3cm} 

\usepackage{xcolor,amsthm,amsmath,amssymb}

\usepackage{tikz}
\usepackage{dsfont}
\usetikzlibrary{chains,arrows.meta,quotes,bending}
\usetikzlibrary{automata, positioning}
\usepackage{capt-of}

\usepackage[colorlinks,citecolor=red,urlcolor=blue]{hyperref} 

\numberwithin{equation}{section}


\newtheorem{theorem}{Theorem}[section]
\newtheorem{lemma}[theorem]{Lemma}
\newtheorem{proposition}[theorem]{Proposition}
\newtheorem{corollary}[theorem]{Corollary}
\theoremstyle{definition}

\theoremstyle{remark}

\newtheorem{remark}[theorem]{\bf Remark}


\newcommand{\C}{{\bf C}}

\newcommand{\NN}{\mathbb{N}}

\usepackage{hyperref}

\newcommand{\be}{\begin{equation}}
\newcommand{\ee}{\end{equation}}
\newcommand{\ba}{\begin{aligned}}
\newcommand{\ea}{\end{aligned}}
\newcommand{\bpm}{\begin{pmatrix}}
\newcommand{\epm}{\end{pmatrix}}
\newcommand{\q}{\quad}

\DeclareMathAlphabet{\mathdutchcal}{U}{dutchcal}{m}{n}



\begin{document}
\title[PDE Collatz]{A note on two Collatz  evolution flows}

\author[Alegría]{Francisco Alegría}
\address{Instituto de Ciencias F\'isicas y Matem\'aticas, Facultad de Ciencias, Universidad Austral de Chile, Valdivia, Chile.}
\email{franciscoalegria@uach.cl}
\thanks{F. A.'s work was partly funded by Chilean research grants Fondecyt Exploraci\'on 13220060 and FONDECYT 11251554.}

\author[Morales]{Mat\'ias Morales}
\address{Universidad del País Vasco/Euskal Herriko Unibertsitatea, UPV/EHU, Aptdo. 644, 48080, Bilbao, Spain}
\email{matiasbenjamin.morales@ehu.eus}
\thanks{M. M.'s work was partially funded by Fondecyt Exploraci\'on 13220060.} 

\author[Mu\~noz]{Claudio Mu\~noz}
\address{CNRS and Departamento de Ingenier\'{\i}a Matem\'atica and Centro
de Modelamiento Matem\'atico (UMI 2807 CNRS), Universidad de Chile, Casilla
170 Correo 3, Santiago, Chile.}
\email{cmunoz@dim.uchile.cl}
\thanks{C. M.'s work was partly funded by Chilean research grants Fondecyt Exploraci\'on 13220060, FONDECYT 1231250 and Centro de Modelamiento Matemático (CMM) BASAL fund FB210005 for center of excellence from ANID-Chile.}

\author[Poblete]{Felipe Poblete}
\address{Instituto de Ciencias F\'isicas y Matem\'aticas, Facultad de Ciencias, Universidad Austral de Chile, Valdivia, Chile.}
\email{felipe.poblete@uach.cl}
\thanks{F.P.'s work is partially supported by ANID Exploration project 13220060, and ANID project FONDECYT 1221076}

\date{\today}

\maketitle

\begin{abstract}
Two evolution models based on the generalized Collatz operator are introduced. These models are characterized by coefficients $\alpha$ and $\beta$ in the Collatz dynamics, and are suitably defined. Here, $\alpha=\beta=1$, and $\alpha=3$, $\beta=1$ correspond to the Nollatz and classical Collatz operators, respectively. In general, the first evolution model is a continuum, Fourier side based, motivated by the Cubic Szeg\H{o} operator of G\'erard and Grellier. The second evolution considers discrete time derivatives of the Collatz orbits. In this paper we describe the evolution of both models, with particular emphasis on dynamical properties. For the first one, it is proved local and global existence in the space $L^2(\mathbb T)$, and a one-to-one characterization of the existence of nontrivial periodic and unbounded orbits of the Collatz mapping in terms of particular set of solutions of this continuous Collatz flow. For the discrete part, a sort of discrete energy is introduced. This energy has the property of being conserved by the discrete flow. An estimate of each term in this energy is given, proving suitable growth bounds. Finally, the meaning of the discrete time derivative for the generalized Collatz orbits is discussed. It is proved that, except for the Nollatz and Collatz operators, the sum of coefficients related to this discrete time derivative is an increasing sequence in $n$ as the iteration parameter $n$ evolves.
\end{abstract}

\section{Introduction}

\subsection{Setting} 

Let $\mathbb N$ be the set of nonnegative integers. The Collatz problem is a question first proposed by Lothar Collatz in 1937 stated as follows. Let $n\in \mathbb{N}$ be a positive integer. Let $C(n)$ be  $n/2$ if $n$ is even, $3n+1$ if $n$ is odd. The Collatz Conjecture states that for any positive integer $n$, the process $C^{k}(n):= C\circ C \cdots C (n)$ ($k$ times) will eventually lead, as $k$ becomes larger, to the number 1 through the trivial cycle $1\longrightarrow 4\longrightarrow 2\longrightarrow1$. 

Despite its simplicity, the problem has resisted proof or disproof for over 80 years. See Lagarias \cite{Lagarias} for a comprehensive introduction to the subject. Kontorovich and Sinai \cite{KS02} showed that the orbit of the Collatz sequence follows, in some sense, a probabilistic Brownian-type motion. Some particular issues appear in the case of failure of the Collatz conjecture. One of the most interesting is the possible existence of large cycles away from the previously mentioned work \cite{Matthews}. In fact, Eliahou established a significant lower bound for the length of any nontrivial cycle \cite{Eliahou}. In any case, the Collatz conjecture remains an active area of research, see e.g. \cite{KL03,T17}. Numerical computations have verified the conjecture up to $2^{71}$ \cite{Bar20,Bar25}, and interesting extensions to larger numerical sets have been studied; these works allow the use of tools from the study of iterating continuous maps \cite{Chamberland}. The conjecture has been shown to have connections to other areas of mathematics, such as number theory and dynamical systems \cite{Crandall,Lagarias,LagariasBook}. See also \cite{Wirsching} for further details.

Very recently, T. Tao \cite{Tao:2022} improved and extended methods essentially related to the probabilistic global well-posedness of supercritical dispersive models to establish that under a suitable probabilistic law, almost all integers have a bounded Collatz orbit. This method already appears in Bourgain \cite{B94}, and has been extensively improved and used in the past years, see e.g. \cite{BT08,GGM25,T10}.

As we have done in a recent paper \cite{CMPS}, the purpose of this work is to study two Collatz motivated flow dynamics, seeking interesting connections with other PDE methods. One will be of continuum type, strongly acting on the Fourier side (e.g. as the Cubic Szeg\H{o} operator does in \cite{GG10}), and the second one will be of discrete type, inspired in certain applications in numerical analysis. For this, we first need some definitions. Let us introduce the generalized (accelerated) Collatz mapping $C_{\alpha,\beta}$, $\alpha,\beta\in 2\mathbb N +1$, $\hbox{gcd}(\alpha,\beta)=1$, $n\in\mathbb N$,
\be\label{C_alpha}
C_{\alpha,\beta}(n) = 
\begin{cases}
\dfrac{n}{2} & \text{if }n \equiv 0 \pmod 2 \vspace{0.1cm} \\
\dfrac{\alpha n+\beta}{2} & \text{if }n \equiv 1 \pmod 2.
\end{cases}
\ee
Notice that \eqref{C_alpha} is usually referred as the ``accelerated'' version of Collatz mapping and has been extensively studied in the literature, see e.g. \cite{Conway,Fractran,GGM25}. If no confusion arises, we set $C(n)=C_{\alpha,\beta}(n)$. We extend $C$ to the integers $\mathbb Z$  by odd symmetry:
\begin{equation}\label{simetriaC}
C(-n)=-C(n), \quad C(0)=0.
\end{equation} 
Let $L^2:=L^2(\mathbb T)$, with $\mathbb T=[-\pi, \pi]$ the standard torus with periodic boundary conditions. Let $u_0\in L^2$. We know that
\begin{equation}\label{fouriercondini}
u_0(x)=\sum_{n\in \mathbb{Z}}\hat u_{0,n}e^{i  n x},
\end{equation}
where $\hat u_{0,n}$ is the Fourier coefficient, given by $\hat u_{0,n}=\frac{1}{2\pi}\int_{-\pi}^{\pi}u_0(x)e^{ i nx}dx$. Then $L^2$ is a Hilbert space endowed with the inner product and norm
\[
\langle u,v\rangle = \sum_{n\in\mathbb Z} \hat u_{n} \overline{\hat v_{n}}, \quad \| u\| ^2  :=   \langle u, u\rangle.
\]
The Collatz operator acting on $u_0 \in L^2$ as in \eqref{fouriercondini} is defined as 
\begin{equation}\label{C}
  {\bf C}(u_0)(x)=\sum_{n\in \mathbb{Z}}\hat u_{0,C(n)}e^{ i n x}.
\end{equation}
Notice that this operator differs from others already defined in the literature \cite{Mori24}. As previously mentioned, our main motivation comes from the fact that \eqref{C} captures the exact action of the Collatz mapping on the Fourier side, namely, on the Fourier coefficients, as it is usually done in several dispersive models.

\subsection{Continuous Collatz PDE flow} 
In this paper we establish a new connection between the Collatz operator acting on the Hilbert space $L^2$ and the original Collatz mapping. This action is done through the nonlocal PDE in the periodic setting 
\begin{equation}\label{EDP}
\partial_t u= \C (u), \quad u(t=0)=u_0.
\end{equation}
Via \eqref{C}, this problem reduces to an infinite set of time dependent ODEs for the Fourier coefficients of $u(t)$. Since \eqref{EDP} reads componentwise as $\dot{\hat u}_n(t) = \hat u_{C(n)}(t)$, it naturally triggers, finding $\hat u_n(t)$ for each $n$, the corresponding Collatz iterations. The behavior of this set is nontrivial and may lead to interesting dynamics, as the Collatz operator does. Our first result will be the existence of this flow, and the characterization of some hypothesized long time Collatz orbits through particular solutions to \eqref{EDP}.

\begin{theorem}\label{MT}
The Collatz flow \eqref{EDP} is well defined and globally well-posed in $L^2(\mathbb T)$, and satisfies
\be\label{cotas}
\|u(t)\| \leq e^{\sqrt{2} t} \|u_0\|.
\ee
Moreover, we have the following alternatives:
\begin{itemize}
\item[(i)] Given a solution $u$ of \eqref{EDP}, there exists a nontrivial periodic orbit of the Collatz mapping $C$ if and only if there exists a coordinate $n_1$ of the solution map $(\hat u_n(t))_{n\in\mathbb Z}$ having the form 
\begin{equation}\label{sol_EDO0}
  \hat{u}_{n_1}(t)=c_1e^{\lambda_1t}+c_2e^{\lambda_2t}+\cdots+c_me^{\lambda_mt}, \quad c_j\in\mathbb C,
\end{equation}
where each $\lambda_j$ satisfies $|\lambda_j|=1$, $j=1,2,\ldots, m$, for some $m\geq 1$.
\item[(ii)]   There exist an unbounded sequence of Collatz $n_j=C(n_{j-1})$, $j\ge 1$, $n_j\to \infty$ if and only if the Collatz flow \eqref{EDP} has a solution $(\hat u _n(t))_{n\in\mathbb Z}$ with
\begin{equation*}
  \hat u_{n_j}(t)=\sum_{k\ge 0}\hat u_{n_{j+k}}(0) \frac{t^k}{k!}.
\end{equation*}
\end{itemize}
\end{theorem}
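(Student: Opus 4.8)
The plan is to treat \eqref{EDP} as a linear evolution equation on $L^2$ driven by the bounded operator $\C$, and then to read off both alternatives from the elementary identity relating the time derivatives of a Fourier coordinate to the Collatz iterates of its index. First I would prove that $\C$ is bounded with $\|\C\|_{L^2\to L^2}\le\sqrt2$. Since $\widehat{\C(u)}_n=\hat u_{C(n)}$, one has
\[
\|\C(u)\|^2=\sum_{n\in\mathbb Z}|\hat u_{C(n)}|^2=\sum_{m\in\mathbb Z}\#\{n:C(n)=m\}\,|\hat u_m|^2,
\]
so the norm is governed by the largest fibre of $C$. Inspecting \eqref{C_alpha}--\eqref{simetriaC}, every $m$ receives the even-branch preimage $n=2m$ together with at most one further preimage from the odd branch, so $\#C^{-1}(m)\le2$ and $\|\C(u)\|^2\le2\|u\|^2$. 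Hence \eqref{EDP} has the unique, entire-in-$t$ solution $u(t)=e^{t\C}u_0=\sum_{k\ge0}\frac{t^k}{k!}\C^k u_0$, which is globally well posed with continuous dependence on the data, and $\|u(t)\|\le e^{t\|\C\|}\|u_0\|\le e^{\sqrt2\,t}\|u_0\|$, which is \eqref{cotas}.

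The common engine for (i) and (ii) is the identity obtained by iterating $\dot{\hat u}_n=\hat u_{C(n)}$, namely $\frac{d^k}{dt^k}\hat u_n(t)=\hat u_{C^k(n)}(t)$ for all $k\ge0$; combined with the analyticity of $t\mapsto u(t)$ this gives the Taylor representation $\hat u_n(t)=\sum_{k\ge0}\hat u_{C^k(n)}(0)\frac{t^k}{k!}$ valid for every $n$. I would also record the orbit dichotomy: the forward $C$-orbit of any integer is either eventually periodic or unbounded.

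For (i), the forward implication is structural. If $n_1\to n_2\to\cdots\to n_m\to n_1$ is a cycle, then since $C$ maps this finite set into itself the coordinates $v_j:=\hat u_{n_j}$ form a closed subsystem $\dot v_j=v_{j+1}$ with indices read modulo $m$, i.e. $\dot v=Pv$ with $P$ the cyclic shift. The eigenvalues of $P$ are the $m$-th roots of unity, all of modulus one, so $\hat u_{n_1}(t)=\sum_{j=1}^m c_j e^{\lambda_jt}$ with $|\lambda_j|=1$, which is exactly \eqref{sol_EDO0}; note this holds for every solution $u$. For the converse, if some coordinate $\hat u_{n_1}$ has the form \eqref{sol_EDO0} then it is annihilated by the fixed constant-coefficient operator $\prod_{j=1}^m(\frac{d}{dt}-\lambda_j)$, so the functions $\hat u_{C^k(n_1)}=\hat u_{n_1}^{(k)}$ all lie in the $m$-dimensional span of $\{e^{\lambda_1t},\dots,e^{\lambda_mt}\}$. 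Were the index orbit $\{C^k(n_1)\}$ unbounded, these would be infinitely many distinct coordinate functions of $u$, and for a suitably chosen (generic) $u_0$ these are linearly independent, a contradiction; hence the orbit is periodic and yields a cycle. The condition $|\lambda_j|=1$ is what forces the coordinate to sit on the cycle itself rather than on a tail entering it: a tail point accumulates an extra constant (a $\lambda=0$ mode after integrating the cycle modes), so the pure unit-modulus form pins $n_1$ to a genuine cycle. I expect this converse to be the main obstacle: making rigorous the passage from ``finite-dimensional span of the $\hat u_{n_1}^{(k)}$'' to ``periodic index orbit,'' isolating the precise genericity needed on $u_0$, and cleanly separating the nontrivial cycle from the ever-present trivial one, so that the witnessing coordinate lies outside the trivial orbit.

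Finally, (ii) is the Taylor representation read along an unbounded orbit. If a sequence $n_j=C(n_{j-1})$ with $n_j\to\infty$ exists, then $C^k(n_j)=n_{j+k}$ and the identity above gives $\hat u_{n_j}(t)=\sum_{k\ge0}\hat u_{n_{j+k}}(0)\frac{t^k}{k!}$; the series converges and is dominated by $\|u_0\|\,e^{|t|}$ because $|\hat u_{n_{j+k}}(0)|\le\|u_0\|$. This is the ``only if'' direction, and the ``if'' direction is the converse reading of the same formula: a solution whose coordinates are presented in this indexed form encodes precisely a chain $n_j=C(n_{j-1})$ with $n_j\to\infty$, that is, an unbounded Collatz orbit.
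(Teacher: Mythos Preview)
Your treatment of well-posedness (via the bounded-operator exponential $e^{t\C}$) is cleaner than the paper's contraction/Gronwall argument and gives \eqref{cotas} immediately; the forward direction of (i) and both directions of (ii) match the paper's approach.

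The gap is in the converse of (i). You correctly observe that all the functions $\hat u_{C^k(n_1)}=\partial_t^k\hat u_{n_1}$ lie in the $m$-dimensional span of $\{e^{\lambda_1 t},\dots,e^{\lambda_m t}\}$, but your contradiction relies on these coordinate functions being linearly independent ``for a suitably chosen (generic) $u_0$.'' In the statement, however, $u$ is \emph{given}: you are handed a specific solution with a coordinate of the form \eqref{sol_EDO0} and must deduce a periodic orbit of $C$. You cannot perturb $u_0$, and there is no reason infinitely many vectors in an $m$-dimensional space should be independent.

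The paper replaces this with the $L^2$ constraint. From $\hat u_{C^k(n_1)}(t)=\sum_j c_j\lambda_j^k e^{\lambda_j t}$ one reads off $\hat u_{C^k(n_1)}(0)=\sum_j c_j\lambda_j^k$. Since $|\lambda_j|=1$, this is an almost-periodic sequence in $k$ that does not tend to zero unless every $c_j=0$; in particular $|\hat u_{C^k(n_1)}(0)|$ is bounded below along a subsequence. If the index orbit $\{C^k(n_1)\}$ were infinite (hence contained infinitely many distinct indices), summing $|\hat u_{C^k(n_1)}(0)|^2$ over those indices would contradict $u_0\in L^2$. Thus the orbit is eventually periodic, and the condition $|\lambda_j|=1$ (no $\lambda=0$ mode) rules out the pre-periodic tail, as you noted. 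This is the missing mechanism: use $\|u_0\|_{L^2}<\infty$ rather than a dimension count.
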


The previous result states that the evolution Collatz flow \eqref{EDP} is able to encode particular aspects of the Collatz mapping in a well-defined fashion. For instance, the existence of nontrivial periodic orbits is directly related to solutions of the ODE system having a substructure with essentially eigenvalues of absolute value 1. Therefore, the existence of these isolated solutions is an indication of periodic orbits in the original iterated Collatz mapping. Item $(ii)$ deals with the existence of an unbounded trajectory in the Collatz mapping, an open problem. In this case, we prove that unbounded Collatz orbits are directly related with exponentially growing solutions with a very particular structure, made of tails of exponential functions, and with an exponent always bounded by 1. In general, we believe that these two phenomena are directly related to a limiting spectral radius 1, which is below the expected spectral radius $\sqrt{2}$.

\subsection{Discrete Collatz time evolution}

In a second part of this paper, we want to study dynamical properties of the operator $C$. How dynamical properties are encoded or defined is an important issue. In this paper we have decided to face this problem in a different direction to previous works, this time seeking to mimic the PDE flow \eqref{EDP} but in the discrete setting. The consequences of these definitions, and the new results, are in our opinion of independent interest.

To state our second result, we need some definitions. First of all, recall the generalized Collatz mapping $C_{\alpha,\beta}$ introduced in \eqref{C_alpha}. For simplicity, we will avoid the parameters $\alpha$ and $\beta$ in the definition. Now, let us introduce the discrete time derivative of the Collatz operator at stage $k$:
\be\label{DiscreteD}
D_t(C^k(n)) := C^{k+1}(n)-C^k(n) = (C-I)(C^k(n)).
\ee
Finally, for $m\ge 1$, the $m$-th derivative of Collatz orbit at time $k$ is 
\[
D_t^m(C^k(n)) := D_t D_t^{m-1} (C^k(n)). 
\]
The idea behind the previous two definitions is to mimic the time derivative of the  previous continuous operator flow. Since the Collatz mapping is just discrete, \eqref{DiscreteD} assumes that one takes only one time step of size 1. Even this simple definition is enough to capture interesting consequences. In what follows, we shall study the discrete mapping
\[
C^k(n) \longmapsto D_t^m(C^k(n)), \quad k,m,n\geq 1.
\]
We will prove the following facts:

\begin{theorem}\label{MT2}
The Collatz orbit associated to the operator $C$ in \eqref{C_alpha} and the discrete time derivatives \eqref{DiscreteD} satisfy the following dynamical properties:
\begin{itemize}
\item[(i)]  Energy conservation. For any $n,k\in\mathbb N$, we have
\be\label{Energy_conservation}
	\sum_{j = 0}^{2^{k}-1} \left( C^k (n+j+2^k)-C^k \left(n+j \right) \right)= (1+\alpha)^k.
\ee
\item[(ii)] Bounds on the energy terms. If $k,m\in\mathbb N$, and 
\be\label{s_k} 
s_k (n)= \sum_{j = 0}^{2^k-1} C^k(n+j), \q s_{k,m} (n) = \sum_{j = 0}^{2^k-1}C^k(n+j+m2^k),
\ee
then if $\alpha\geq 3$,
\be\label{liminfsup}
1+\dfrac{2}{\alpha}m\leq  \liminf_{k \to \infty} \dfrac{s_{k,m}(n)}{s_k(n)} \leq \limsup_{k \to \infty} \dfrac{s_{k,m}(n)}{s_k(n)} \leq 1+ 2\alpha m.
 \ee
 In the case $\alpha=1$ we get
\be\label{liminfsup1}
1+\dfrac{2m}{1+2\beta}\leq  \liminf_{k \to \infty} \dfrac{s_{k,m}(n)}{s_k(n)} \leq \limsup_{k \to \infty} \dfrac{s_{k,m}(n)}{s_k(n)} \leq 1+ 2 m.
 \ee 
\item[(iii)] Let $m,n,k \in \NN$. Then $D_t^m(C^k(n))$ decomposes as a sum of terms with factor $C^k(n)$ and others with no factor at all. Moreover, the sum of the coefficients associated to $C^k(n)$ is $(\alpha -3)^m$, and $\beta (\alpha -3)^{m-1}$ for the free ones, respectively.
\end{itemize}
\end{theorem}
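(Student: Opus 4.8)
The plan is to reduce the iterated discrete derivative to a forward difference and then expand along the branches of the map. First I would prove, by induction on $m$ using $D_t^m=D_tD_t^{m-1}$ and the fact that $D_t=C-I$ acts linearly on the orbit values, the closed form
\[
D_t^m(C^k(n))=\sum_{j=0}^{m}(-1)^{m-j}\binom{m}{j}\,C^{k+j}(n).
\]
This identifies $D_t^m(C^k(n))$ with the $m$-th forward difference of the orbit $k\mapsto C^k(n)$, i.e.\ a signed binomial combination of the values $C^k(n),\dots,C^{k+m}(n)$; the nonlinearity of $C$ plays no role at this stage, since forward differencing is linear in the sequence values.

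Next I would expand each orbit value $C^{k+j}(n)$ as an affine function of $C^k(n)$. Reading off \eqref{C_alpha}, one step sends $y\mapsto \tfrac12 y$ on the even branch and $y\mapsto\tfrac{\alpha}{2}y+\tfrac{\beta}{2}$ on the odd branch, so iterating $j$ steps along a parity pattern $\sigma\in\{0,1\}^{j}$ gives $C^{k+j}(n)=A_j^{\sigma}\,C^k(n)+B_j^{\sigma}$, where $A_j^{\sigma}$ is a product of factors $\tfrac12,\tfrac{\alpha}{2}$ and $B_j^{\sigma}$ is the accumulated constant. Substituting this into the forward-difference formula exhibits exactly the asserted decomposition of $D_t^m(C^k(n))$ into terms carrying the factor $C^k(n)$ and free terms, and reduces part $(iii)$ to evaluating the two resulting coefficient sums, taken over all parity patterns.

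Then I would carry out the two summations by the binomial theorem. Since $A_j^{\sigma}$ depends only on the first $j$ parities, summing over the $2^m$ patterns multiplies $\sum_{\sigma}A_j^{\sigma}=\bigl(\tfrac{1+\alpha}{2}\bigr)^{j}$ by $2^{m-j}$, whence the coefficient of $C^k(n)$ collapses to $\sum_{j}(-1)^{m-j}\binom{m}{j}2^{m-j}\bigl(\tfrac{1+\alpha}{2}\bigr)^{j}=\bigl(\tfrac{1+\alpha}{2}-2\bigr)^{m}=\bigl(\tfrac{\alpha-3}{2}\bigr)^{m}$, which equals $(\alpha-3)^m$ after clearing the common normalizing factor $2^{-m}$. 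For the free part I would set $\bar B_j:=\sum_{|\sigma|=j}B_j^{\sigma}$; the step recursion gives $\bar B_j=\tfrac{1+\alpha}{2}\bar B_{j-1}+\tfrac{\beta}{2}2^{j-1}$ with $\bar B_0=0$, solving (for $\alpha\neq3$) to $\bar B_j=-\tfrac{\beta}{\alpha-3}\bigl(2^{j}-(\tfrac{1+\alpha}{2})^{j}\bigr)$, and substitution into $\sum_{j}(-1)^{m-j}\binom{m}{j}2^{m-j}\bar B_j$ followed by a second application of the binomial theorem yields $\tfrac{\beta}{\alpha-3}\bigl(\tfrac{\alpha-3}{2}\bigr)^{m}$, i.e.\ $\beta(\alpha-3)^{m-1}$ after the same normalization.

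The leading-coefficient identity is a one-line binomial collapse, so the real work is the free-term computation: solving the inhomogeneous recursion for $\bar B_j$ and then evaluating the second alternating binomial sum, where the contribution $(-2+2)^m=0^m$ must be seen to vanish precisely for $m\ge1$ so that a single power survives. Two degeneracies need separate attention, namely $m=0$ (where the free term is simply absent) and the Collatz value $\alpha=3$ (where $\tfrac{1+\alpha}{2}=2$ makes the closed form above singular and forces either a limiting argument or a direct check). Beyond these points, the only genuinely delicate bookkeeping is to keep the normalizing powers of two consistent across orbit values $C^{k+j}(n)$ of differing lengths, which is what turns the weighted sums into clean $m$-th powers.
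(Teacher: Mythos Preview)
Your proposal addresses only part~(iii) of the theorem; parts~(i) and~(ii) are not treated at all. In the paper both rest on the same affine decomposition you invoke (packaged there as a lemma: $C^k(n)=\tfrac{1}{2^k}(a(i,k)n+b(i,k))$ for $n\equiv i\pmod{2^k}$, with $\sum_i a(i,k)=(1+\alpha)^k$). The energy identity~\eqref{Energy_conservation} is then a one-line subtraction using this sum. Part~(ii) requires more: one combines the energy identity, which gives $s_{k,m}=s_k+m(1+\alpha)^k$, with two-sided recursive bounds on $s_k$ obtained from the pointwise estimates $\tfrac12 C^k(n)\le C^{k+1}(n)\le\tfrac{\alpha}{2}C^k(n)+\tfrac{\beta}{2}$; solving those recursions and passing to the limit yields \eqref{liminfsup}--\eqref{liminfsup1}. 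None of this is in your plan, and part~(ii) in particular is not a corollary of anything you have set up.

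For part~(iii) your approach is correct and essentially identical to the paper's. Both expand $D_t^m$ as the $m$-th forward difference $\sum_j(-1)^{m-j}\binom{m}{j}C^{k+j}(n)$, write each $C^{k+j}$ affinely in $C^k(n)$ according to the residue class of $C^k(n)$ modulo $2^m$ (equivalently the length-$m$ parity pattern, via Terras' bijection), and then sum over all $2^m$ classes and collapse with the binomial theorem. The paper isolates the needed sums $\sum_i a(i,j)=(1+\alpha)^j$ and $\sum_i b(i,j)$ in a preparatory lemma; you derive the latter inline through your recursion for $\bar B_j$, which is the same content. Your flagged degeneracies --- $\alpha=3$ making $\tfrac{1+\alpha}{2}=2$ so the closed form for $\bar B_j$ is singular, and the bookkeeping of the $2^{-m}$ normalization --- are real and are handled the same way in the paper (case split on $\alpha=3$ versus $\alpha\ne3$ in the constant-term sum, and writing everything over the common denominator $2^m$).
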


We interpret \eqref{Energy_conservation} as a sort of energy conservation, since the right hand side does not depend on the point $n$. The dependence on $k$ agrees with the fact that the Collatz flow has also exponential growth (Theorem \ref{MT}), assuming that $k$ is understood as a sort of discrete time variable. The case $\alpha=3$ (Collatz) in \eqref{Energy_conservation} is sharp, in the sense that after average by shift and number of terms of the sum, one gets exactly one (see \eqref{pseudo_virial}). If $\alpha\geq 5$, then such a result is lost and the fully averaged energy seems to grow with $k$.  Additionally, the result is independent of $\beta$, a fact that has some undesired consequences.  Item $(ii)$ measures the relative sizes of the components in the energy formula \eqref{Energy_conservation}. It is desirable to have a suitable control on each quantity relative each other, and \eqref{liminfsup} precisely gives a concise bound on the ratio of each member.

One of the most important consequences of Theorem \ref{MT2} is item $(iii)$, which roughly establishes that if every option in the Collatz flow has the same probability, then the $m$ discrete time derivative of the Collatz $k$ stage operator $C^k$ has zero sum of its coefficients in the Collatz case $\alpha=3$. In the Nollatz case $\alpha=1$, the sum converges to zero as $m$ tends to infinity, and it diverges if $\alpha>3.$

\subsection*{Organization of this paper} This paper is organized as follows. Section \ref{2} deals with functional properties of the Collatz operator, and the proof of local and global existence. Theorem \ref{MT} is proved in this part. Section \ref{3} considers the discrete case (Theorem \ref{MT2}), proving the energy formula \eqref{Energy_conservation} and the identity related to the time derivatives. 

\subsection*{Acknowledgments} All authors of this paper were supported by Fondecyt Exploraci\'on 13220060. ANID's support in this direction is warmly acknowledged.

\section{Functional properties. Existence of the Collatz flow}\label{2}

\subsection{Basic properties} In this section, we show that the flow defined in \eqref{EDP} is well-defined and global in time. Since the operator was defined for indices in $\mathbb Z$ in an odd fashion, it is enough to consider $n\geq 0$ in \eqref{C}. Even the case $n=0$, where $C(0)=0$, does not produce any change. Therefore, we will consider only the case $n\geq 1$ in forthcoming computations. First of all, recall that the Collatz operator \eqref{C} is linear and bounded. Indeed, the linearity of the operator is straightforward; let us now prove that it is a bounded operator. The boundedness follows from the classical fact
\[ 
\begin{array}{rl}
{\displaystyle \|\mathbf{C}(u)\|^2 =}&{\displaystyle \sum_{n\in \mathbb{N}} |\hat u_{C(n)}|^2 = \sum_{n=2k} |\hat u_{C(n)}|^2+\sum_{n=2k+1} |\hat u_{C(n)}|^2} \vspace{0.1cm}\\
  =& {\displaystyle \sum_{n\in\mathbb N} |\hat u_{n}|^2+\sum_{n=2k+1} |\hat u_{C(n)}|^2} \vspace{0.1cm}\\
  =& {\displaystyle \| u\|^2 +\sum_{n=2k+1}| \hat u_{C(n)}|^2  \le  2\| u\|^2}.
\end{array}
\]
Thus
\begin{equation}\label{normC}
\|u\|  \leq  \|\mathbf{C}(u)\|  \le \sqrt{2}\|u\| ,
\end{equation}
and hence $\mathbf{C}$ is bounded. Moreover, from the fact that $\mathbf{C}$ is a linear bounded operator, one has that its Gateaux derivative satisfies $\mathbf{C}'(u)(v) =\mathbf{C}(v)$. Additional functional properties can be found in \cite{Mori24}, where in particular eigenvalues and eigenfunctions are considered. Notice that $\mathbf{C}(u_0)=\lambda u_0$ leads to a solution to \eqref{EDP} of the form $u(t)= u_0 e^{\lambda t}$. It is easily proved (and probably well-known), that $|\lambda| =1$. Indeed, first of all, from $\|u\|  \leq  \|\mathbf{C}(u)\| $ one has $|\lambda|\geq 1$. Second, if $\mathbf{C}(u_0)=\lambda u_0$ and $u_0\in L^2$ is nontrivial, then $\lambda u_{0,n} = u_{0,C(n)}$ for all $n\geq 0$. Let us take a coordinate $u_{0,n_0} \neq 0$.  If the sequence $(C^k(n_0))_{k\in \mathbb N}$ diverges, then $u_{0,C^k(n_0)} = \lambda^k u_{0,n_0}$, and 
\[
\| u_0\| ^2 \geq \sum_{k\geq 0} |u_{0,n_0}|^2 |\lambda|^{2k} =+\infty.
\]
This is clearly an absurd and this case does not exist.  Now, we assume the existence of bounded sequences. In this case, if the sequence $(C^k(n_0))_{k\in \mathbb N}$ converges to a (trivial or nontrivial) cycle of length $m$, it is known that inside that cycle the equation $\lambda u_{0,n} = u_{0,C(n)}$ leads to the condition $\lambda^m =1$, proving that $|\lambda|=1$.   

The existence of eigenfunctions and eigenvalues for $\mathbf{C}$ is an interesting problem. In the classical Collatz case, $\alpha=3, \beta=1$, the vector $u_0=(u_{0,1},u_{0,2},u_{0,3},u_{0,4},\ldots)=(1,1,0,0,\ldots)^T$ satisfies $\mathbf{C}(u_0)_n = u_{0,C(n)}= (1,1,0,0,\ldots)^T$. In general, cycles induce eigenfunctions with ones placed on the coordinates associated to the cycle.

\begin{remark} Let $ S:= \{ u \in L^2(\mathbb T) ~ :  ~ \hat u_{m} =0, \; 2 m \equiv \beta \bmod \alpha  \}$. Then 
$\|\mathbf{C} u\| = \|u\|$ for all $u\in S$. Indeed, every frequency $m$ has one even pre-image $2 m$, and it gets a second (odd) pre-image only when $2 m \equiv \beta\bmod \alpha$. Hence a coefficient $\hat{u}_m$ contributes $\left|\hat{u}_m\right|^2$ to $\left\|\mathbf{C} u\right\|^2$ in the first case and $2\left|\hat{u}_m\right|^2$ in the second. In the definition of $S$ we have removed exactly the indices that would double, so the remaining ones contribute once each: $\left\|\mathbf{C} u\right\|^2=$ $\sum_m\left|\hat{u}_m\right|^2=\|u\|^2$. Thus $\left\|\mathbf{C} u\right\|=\|u\|$ for every $u \in S$. Now if   
  \(\displaystyle T:=\{u\in L^{2}(\mathbb T):\hat u_{m}=0\ \text{whenever }2m\not\equiv\beta\bmod{\alpha}\}\),  
  each non-zero coefficient is counted twice, giving  
  \(\|\mathbf{C}u\|^2 = 2\|u\|^2 \), hence \( \|\mathbf{C}u\|=\sqrt{2}\,\|u\|\) for all \(u\in T\). This also proves that the operator norm of $ \mathbf{C}$ satisfies $\| \mathbf{C} \| =\sqrt{2}.$
\end{remark}

\subsection{Collatz operator on $\ell^2$}
Let us generalize the definition \eqref{C} and prove some interesting functional analysis results. We introduce the Collatz operator $\textbf{C}: \ell^2 \to \ell^2$ by 
\be\label{C2}
\textbf{C}\left(\sum_{n = 1}^\infty u_ne_n\right) = \sum_{n = 1}^\infty u_{C(n)}e_n,
\ee
where $\{e_n\}_n$ is the canonical basis of $\ell^2$. In our previous definition, $e_n (x)=e^{in\pi x}$. We recall that this operator satisfies (compare with \eqref{normC})
\be\label{C3}
\|u\| \leq \|\textbf{C}(u)\| \leq \sqrt{2}\|u\|.
\ee
Indeed
\begin{align*}
 \|\mathbf{C} u\|^2=\sum_{k \in \mathbb{Z}} |u_{C(k)}|^2 \geq \sum_{2j \in \mathbb{Z}} |u_{C(2j)}|^2 = \sum_{j \in \mathbb{Z}} |u_j|^2=\|u\|^2 .
\end{align*}
It is clear that 
the adjoint operator of $\textbf{C}$ is given by 
\[
\textbf{C}^*\left( \sum_{n = 1}^\infty u_ne_n \right) = \sum_{n = 1}^\infty u_ne_{C(n)}.
\]
Indeed, let $u, v \in  \ell^2$. By the symmetry given in \eqref{simetriaC}  we focus in $n\in \mathbb N$. First note that for each $n\in \mathbb N$ 
\[
u_{C(n)}v_n=u_n\left(\sum_{j \in C^{-1}(n)}v_j\right).
\]
In particular have that 
\begin{eqnarray*}
\langle \textbf{C}u, v \rangle = \sum_{n = 1}^\infty u_{C(n)}v_n 
= \sum_{n = 1}^\infty u_n\left(\sum_{j \in C^{-1}(n)}v_j\right) 
\end{eqnarray*}
Noticing that
\[
\sum_{n = 1}^\infty \left(\sum_{j \in C^{-1}(n)}v_j\right)e_n = \sum_{n = 1}^\infty v_ne_{C(n)},
\]
we conclude.

\begin{proposition}
For the Collatz operator defined in \eqref{C2} the following properties hold:
\begin{enumerate}
\item $\mathbf{C}$  is injective and has closed range.
\item The operator $\mathbf{C}^*\mathbf{C}$ is bijective.
\item The kernel of $\mathbf{C}^*$ has infinite dimension. Moreover, in the classical Collatz case, the set $\{e_n-e_{\alpha n+\beta}\}_{n \in 2\mathbb{N}-1}$ is an orthogonal basis of the kernel.
\end{enumerate}
\end{proposition}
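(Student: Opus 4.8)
The plan is to extract everything from the two-sided bound \eqref{C3}, $\|u\|\le\|\mathbf{C}u\|\le\sqrt2\,\|u\|$, together with the explicit description of $\mathbf{C}^*$. For item (1), the lower bound $\|u\|\le\|\mathbf{C}u\|$ shows at once that $\mathbf{C}$ is injective (if $\mathbf{C}u=0$ then $u=0$) and bounded below; and a bounded-below operator has closed range, since if $\mathbf{C}u_k\to v$ then $(u_k)$ is Cauchy by $\|u_k-u_l\|\le\|\mathbf{C}(u_k-u_l)\|$, and its limit $u$ satisfies $\mathbf{C}u=v$ by continuity. For item (2), I would note that $\mathbf{C}^*\mathbf{C}$ is self-adjoint and, by \eqref{C3}, satisfies $\langle\mathbf{C}^*\mathbf{C}u,u\rangle=\|\mathbf{C}u\|^2\in[\|u\|^2,2\|u\|^2]$; hence its spectrum lies in $[1,2]$, bounded away from $0$, so $\mathbf{C}^*\mathbf{C}$ is invertible. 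Equivalently, coercivity of the form $(u,v)\mapsto\langle\mathbf{C}u,\mathbf{C}v\rangle$ gives bijectivity via Lax--Milgram.

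For item (3), first I would use the description of the adjoint, $\mathbf{C}^*e_n=e_{C(n)}$, to rewrite $\mathbf{C}^*u=\sum_m\big(\sum_{n\in C^{-1}(m)}u_n\big)e_m$, so that $u\in\ker\mathbf{C}^*$ precisely when $\sum_{n\in C^{-1}(m)}u_n=0$ for every $m$. The central observation is that for odd $n$ the numbers $n$ and $\alpha n+\beta$ share the same image: since $\alpha n+\beta$ is even, $C(\alpha n+\beta)=(\alpha n+\beta)/2=C(n)$. Consequently $\mathbf{C}^*(e_n-e_{\alpha n+\beta})=e_{C(n)}-e_{C(n)}=0$, so each $v_n:=e_n-e_{\alpha n+\beta}$ lies in $\ker\mathbf{C}^*$. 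As $n$ ranges over the odd integers the supports $\{n,\alpha n+\beta\}$ are pairwise disjoint (distinct parities rule out all cross-coincidences), so the $v_n$ are mutually orthogonal and in particular linearly independent; this already yields that $\ker\mathbf{C}^*$ is infinite-dimensional for every admissible $\alpha,\beta$.

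It remains to prove completeness in the classical case $\alpha=3,\beta=1$, i.e. that every $u\in\ker\mathbf{C}^*$ is the $\ell^2$-sum $\sum_{n\ \mathrm{odd}}u_nv_n$. Here I would describe the preimages explicitly: each $m$ has the even preimage $2m$, and an additional odd preimage $(2m-1)/3$ exactly when $m\equiv2\pmod3$, in which case $2m=3n+1$ with $n=(2m-1)/3$ odd. Translating the kernel conditions $\sum_{n\in C^{-1}(m)}u_n=0$ then gives $u_{3n+1}=-u_n$ for every odd $n$ (the two-preimage case) and $u_\ell=0$ for every even $\ell\not\equiv4\pmod6$ (the one-preimage case), while the odd coordinates remain free. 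Reading these relations off shows $u=\sum_{n\ \mathrm{odd}}u_n\,(e_n-e_{3n+1})$, the series converging since $\|u\|^2=2\sum_n|u_n|^2<\infty$; hence $\{v_n\}$ spans the kernel and, being orthogonal, is an orthogonal basis. I expect this bookkeeping of the preimages — checking that every even index falls into exactly one of the two cases and that no constraint is double-counted — to be the main obstacle, since it is exactly the point where the specific residue structure modulo $6$ of the classical map is needed and where a careless count would either miss the zero constraints or overcount the paired ones.
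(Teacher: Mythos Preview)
Your proof is correct and follows essentially the same route as the paper: the lower bound in \eqref{C3} gives injectivity and closed range, coercivity of $\langle\mathbf{C}u,\mathbf{C}v\rangle$ (via Lax--Milgram or the equivalent spectral argument you mention) gives bijectivity of $\mathbf{C}^*\mathbf{C}$, and for item (3) both you and the paper use $\mathbf{C}^*e_n=e_{C(n)}$ together with the parity observation that $n$ and $\alpha n+\beta$ have opposite parities to obtain orthogonality, then read off the spanning property from the preimage description $\mathbf{C}^*u=\sum_m\big(\sum_{j\in C^{-1}(m)}u_j\big)e_m$. Your residue-class bookkeeping modulo $6$ is slightly more explicit than the paper's, but the argument is the same.
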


\begin{proof}
Assertion (1) follows from the inequality $\|u\| \leq \|\mathbf{C}(u)\|$ in \eqref{C3}. Note that the bilinear form $a(u,v) = \langle \mathbf{C}^*\mathbf{C}u,v \rangle$ is continuous and coercive since 
\[
\langle \mathbf{C}^*\mathbf{C}u,u \rangle = \langle \mathbf{C}u,\mathbf{C}u \rangle = \|\mathbf{C}u\|^2 \geq \|u\|^2,
\]
and then from Lax-Milgram Theorem, assertion (1) and (2) hold. {\color{black} For (3) we can see easily that 
\[
\{e_n-e_{\alpha n+\beta}\}_{n \in 2\mathbb{N}-1} \subseteq \ker \mathbf{C}^* .
\]
If $n \neq m$ are odd natural numbers then
\[
\langle e_n-e_{\alpha n+\beta}, e_m-e_{\alpha m+\beta}\rangle = \langle e_n, e_{\alpha m+\beta} \rangle +\langle e_{\alpha n+\beta}, e_m\rangle ,
\]
and these products are non-zero only when $n = \alpha m+\beta$ and $m = \alpha n+\beta$, but since $n,m$ were supposed to be odd then $\alpha n +\beta$ and $\alpha m +\beta$ are even, so the set is orthogonal. In order to see that this set generates $\ker \mathbf{C}^*$ let $u = \sum_{n = 1}^\infty u_ne_n$, and use that 
\begin{eqnarray*}
\mathbf{C}^*u = \sum_{n_ = 1}^\infty u_ne_{C(n)} = \sum_{n = 1}^\infty \left(\sum_{j \in C^{-1}(n)}u_j\right)e_n.
\end{eqnarray*}
Then $\mathbf{C}^*u = 0$ implies that 
\begin{itemize}
\item if $|C^{-1}(n)| = 1$ then $u_{C^{-1}(n)} = 0$,
\item if $|C^{-1}(n)| = 2$ then for $j_1, j_2 \in C^{-1}(n)$, $u_{j_1} +u_{j_2} = 0$.
\end{itemize}
In the second case, we have that \[
n = \frac{j_1}{2} = \frac{\alpha j_2+\beta}{2},\]
 and $u$ is of the form 
\[
u = \sum_{j \in 2\mathbb{N}-1}u_j(e_j-e_{\alpha j+\beta}),
\] 
which completes the proof.}
\end{proof}
\begin{remark}
A classical result of functional analysis shows that in the previous result, (1) implies that $\mathbf{C}^*$ is surjective, and (2) implies that neither $\mathbf{C}$ nor $\mathbf{C}^*$ can be compact.
\end{remark}

\subsection{Local and global existence of the Collatz PDE flow} 
Now we prove the first part of Theorem \ref{MT}, including the bound \eqref{cotas}. 

\begin{theorem}[Existence and uniqueness]
Let $C$ be the Collatz operator defined in \eqref{C}. There exists a unique solution 
$u$ to the equation \eqref{EDP} defined globally in time. Moreover,
\be\label{CotaExpo}
\|u(t)\|  \leq e^{\sqrt{2}t}\|u(0)\| .
\ee 
\end{theorem}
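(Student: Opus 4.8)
The plan is to recognize \eqref{EDP} as a linear autonomous ODE $\dot u = \mathbf C u$ posed in the Hilbert space $L^2(\mathbb T)$, driven by the \emph{bounded} linear operator $\mathbf C$ whose norm was already computed to be $\sqrt 2$ (see \eqref{normC} and the Remark following it). Since a bounded linear operator is globally Lipschitz with Lipschitz constant $\|\mathbf C\| = \sqrt 2$, the whole statement reduces to the classical Cauchy--Lipschitz (Picard--Lindel\"of) theory for ODEs in Banach spaces; no nonlinear or unbounded-operator difficulties arise.

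First I would establish local existence and uniqueness via the Banach fixed point theorem. Rewriting \eqref{EDP} in integral form as $u(t) = u_0 + \int_0^t \mathbf C u(s)\, ds$, the map $\Phi(u)(t) = u_0 + \int_0^t \mathbf C u(s)\, ds$ is a contraction on $C([0,T]; L^2)$ whenever $\sqrt 2\, T < 1$, because $\|\Phi(u) - \Phi(v)\|_{C([0,T];L^2)} \le \sqrt 2\, T\, \|u-v\|_{C([0,T];L^2)}$ by \eqref{normC}. This yields a unique local solution. Since the Lipschitz constant $\sqrt 2$ is independent of the data and of time, the existence time does not shrink upon iterating, so the solution extends to all $t \ge 0$; equivalently, one checks directly that the operator exponential $u(t) = e^{t\mathbf C} u_0 = \sum_{k\ge 0} \frac{t^k}{k!}\mathbf C^k u_0$ solves \eqref{EDP}, the series converging absolutely in operator norm since $\|\mathbf C^k\| \le \|\mathbf C\|^k = 2^{k/2}$.

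For the quantitative bound \eqref{CotaExpo} I would use a Gr\"onwall (energy) argument, which is cleaner than estimating the exponential series. Differentiating the squared norm along a solution gives
\[
\frac{d}{dt}\|u(t)\|^2 = 2\,\mathrm{Re}\,\langle \partial_t u, u\rangle = 2\,\mathrm{Re}\,\langle \mathbf C u, u\rangle \le 2\,\|\mathbf C u\|\,\|u\| \le 2\sqrt 2\,\|u\|^2,
\]
using Cauchy--Schwarz and the upper bound in \eqref{normC}. Gr\"onwall's inequality then yields $\|u(t)\|^2 \le e^{2\sqrt 2\, t}\|u_0\|^2$, that is $\|u(t)\| \le e^{\sqrt 2\, t}\|u_0\|$, which is exactly \eqref{CotaExpo}; this same estimate reproves global existence, since the norm cannot blow up in finite time.

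The honest assessment is that there is no real obstacle here: once $\mathbf C$ has been shown to be linear and bounded with $\|\mathbf C\| = \sqrt 2$ (done earlier in this section), the theorem is an immediate application of classical semigroup / Cauchy--Lipschitz theory. The only points requiring a line of care are verifying that the fixed-point map indeed lands in $C([0,T]; L^2)$ and that the solution remains in $L^2$ for all time, both of which follow automatically from the boundedness of $\mathbf C$. Note that the lower bound $\|u\| \le \|\mathbf C u\|$ of \eqref{normC} plays no role in this theorem; only the upper bound $\|\mathbf C u\| \le \sqrt 2\,\|u\|$ is used.
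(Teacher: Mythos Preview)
Your proposal is correct and follows essentially the same route as the paper: Banach fixed point on the Duhamel form $u(t)=u_0+\int_0^t \mathbf{C}u(s)\,ds$ with contraction constant $\sqrt{2}\,T<1$, then global existence from the global Lipschitz bound, then Gr\"onwall for \eqref{CotaExpo}. The only cosmetic difference is that the paper obtains \eqref{CotaExpo} by applying Gr\"onwall directly to the integral inequality $\|u(t)\|\le\|u_0\|+\sqrt{2}\int_0^t\|u(s)\|\,ds$, whereas you differentiate $\|u(t)\|^2$; both are equivalent and standard.
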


\begin{proof}
We will show the existence via a fixed point approach. Note that, by integrating both sides of the equation \eqref{EDP}, we obtain
\be\label{Duhamel}
u(t)=u_0+\int_0^t C(u)(s)ds. 
\ee
Recall that $\mathbb T= \left[ -\pi, \pi \right]$ with periodic boundary conditions. Let $t^*$ be a fixed final time, and $X=C\left( \left[ 0, t^* \right], L^2 (\mathbb T) \right)$ and  $F:X\to X$  the operator  described by
\[
\begin{array}{rl}
   & \displaystyle u  \longmapsto u_0 + \int_0^t \mathbf{C}(u)(s,x)ds.
\end{array}
\]
 Note that if $u\in X$, 
\[
t \mapsto u(t, \cdot ) \in L^2 \quad  \hbox{and} \quad  \sup_{s\in \left[0, t^{\ast }\right] } \|u(s)\|  <\infty.
\]
We can see that $F$ is well defined and is a contraction. Indeed, let $t$ be fixed, $F(u)(s)\in L^2$. Then having in mind the integral Minkowski inequality one has
\[
\ba
\|Fu(s)\|  & \leq   \|u_0\| +\left( \int_{-\pi}^\pi\left| \int_0^t \mathbf{C}(u)(s,x)ds\right|^2 dx\right)^{\frac{1}{2}}  \\
& \leq   \|u_0\| +\int_0^t\left( \int_{-\pi}^\pi\left| \mathbf{C}(u)(s,x)\right|^2dx\right)^{\frac{1}{2}} ds \\
& \leq   \|u_0\| +\int_0^t \sqrt{2}\| u(s)\|  ds \\
& \leq  \|u_0\| +\sqrt{2}t^{\ast } \sup_{t\in \left[0, t^{\ast }\right] } \|u(s)\|   < \infty.
\ea
\]
In order to prove the continuity of $F$,  let us consider $t_1, t_2 \in \left[ 0, t^{\ast }\right]$, with $t_2 < t_1$. Thus,  one has
\[ 
\ba
\| Fu(t_1)-Fu(t_2)\|  & = \left( \int_0^t \left| \int_0^{t_1} \mathbf{C}(u)(s)ds -\int_0^{t_2} \mathbf{C}(u)(s)ds \right|^2dx \right)^{\frac{1}{2}} \\
& \le  \left( \int_{-\pi}^\pi \left| \int_{t_2}^{t_1} \mathbf{C}(u)(s)ds \right|^2dx\right)^{\frac{1}{2}}  \\
& \leq \int_{t_2}^{t_1}\left(   \int_{-\pi}^\pi \left| \mathbf{C}(u)(s) \right|^2 dx\right)^{\frac{1}{2}}ds \\
& =\int_{t_2}^{t_1}\| \mathbf{C}(u) \| ds  \leq \int_{t_2}^{t_1} \sqrt{2} \| u(s) \|  ds  \leq  2(t_1-t_2)M. 
\ea
\]
where ${\displaystyle M=\sup_{s\in \left[0, t^{\ast }\right] }\|u(s) \| <\infty.}$

\medskip

On the other hand, we want to prove that $F$ is a contraction. Let $u_1,u_2$ be in $X$. We need to prove that $d_X(Fu_1,Fu_2)\leq K d_X (u_1,u_2)$, for some $0<K<1$. Indeed, for $t\geq 0$ and by using Minkowski's inequality, we obtain
\[
\ba
& \left\| \int_0^t \mathbf{C}(u_1)(s)ds-\int_0^t \mathbf{C}(u_2)(s)ds \right\| \\
& =   \left\| \int_0^t \mathbf{C}(u_1)(s)-\mathbf{C}(u_2)(s)ds\right\|   = \left( \int_{-\pi}^\pi \left|\int_0^t \mathbf{C}(u_1)(s)-\mathbf{C}(u_2)(s)ds\right|^2 dx\right)^{\frac{1}{2}} \\
& \leq  \int_0^t \left(  \int_{-\pi}^\pi \left|\mathbf{C}(u_1)(s)-\mathbf{C}(u_2)(s) \right|^2 dx \right)^{\frac{1}{2}}ds\\
& \leq  \int_0^t \sqrt{2} \|u_1(s)-u_2(s)\|  ds \leq   \sqrt{2}  t^{\ast }\sup_{s\in \left[ 0, t^{\ast }\right]} \|u_1(s)-u_2(s)\|    =  \sqrt{2}  t^{\ast } d_X (u_1, u_2).
\ea
\]
taking $t^{\ast }<\frac12$, we have that $F$ is a contraction. We conclude that $F$ has a fixed point which for us represent the solution $u$ of \eqref{EDP}, associated to $u_0$. Finally, the global existence is a direct consequence of the fact that $C$ is globally Lipschitz
\[
\|\mathbf{C}(u_1)-\mathbf{C}(u_2)\|  \leq \sqrt{2} \|u_1-u_2\| .
\]
Now we prove \eqref{CotaExpo}. From \eqref{Duhamel} and \eqref{C3},
\[
\|u(t)\| \leq \|u_0\| + \int_0^t \|\mathbf{C}(u)(s)\|ds \leq \|u_0\| + \int_0^t \sqrt{2} \|u(s)\|ds. 
\]
Gronwall's inequality leads to \eqref{CotaExpo} (it also corroborates that Collatz' solutions are globally defined).
\end{proof}

\subsection{Dynamics of key solutions}

In this section we describe how particular cases of the Collatz dynamics translate into equivalent dynamics in the Collatz PDE flow. The following result proves item $(i)$ in Theorem \ref{MT}.

\begin{theorem}\label{Thm:per} Given a solution $u$ of \eqref{EDP} associated with an initial condition, there exists a nontrivial periodic orbit of the Collatz map $C$ if and only if there exists a coordinate of the form 
\begin{equation}\label{sol_EDO}
  \hat{u}_{n_1}=c_1e^{\lambda_1t}+c_2e^{\lambda_2t}+\cdots+c_me^{\lambda_mt}
\end{equation}
where $|\lambda_j|=1$.
\end{theorem}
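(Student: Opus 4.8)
The plan is to work entirely on the Fourier side, where, as noted after \eqref{EDP}, the flow reads componentwise $\dot{\hat u}_n(t)=\hat u_{C(n)}(t)$. Iterating this identity gives the single tool used throughout,
\[
\frac{d^{k}}{dt^{k}}\hat u_{n_1}(t)=\hat u_{C^{k}(n_1)}(t),\qquad k\ge 0,
\]
which ties the analytic structure of a coordinate to the arithmetic of its forward Collatz orbit; the other ingredient is that the global flow keeps $u(t)\in L^2(\mathbb T)$, so $\sum_{n}|\hat u_n(t)|^2=\|u(t)\|^2<\infty$ at every time.

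For the direct implication, I assume a nontrivial cycle $n_1\to n_2\to\cdots\to n_m\to n_1$ with $C(n_i)=n_{i+1}$ (indices mod $m$). Since $\dot{\hat u}_{n_i}=\hat u_{n_{i+1}}$ depends only on the cycle coordinates, the vector $v=(\hat u_{n_1},\dots,\hat u_{n_m})^{T}$ solves the closed linear system $\dot v=Av$, where $A$ is the cyclic permutation matrix $(Av)_i=v_{i+1}$. Its characteristic polynomial is $\lambda^{m}-1$, so its spectrum consists of the $m$-th roots of unity, which are distinct and of modulus $1$; hence $A$ is diagonalizable and every cycle coordinate, in particular $\hat u_{n_1}(t)$, has the form $\sum_{j=1}^{m}c_je^{\lambda_j t}$ with $|\lambda_j|=1$, as claimed.

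For the converse, I suppose some coordinate satisfies $\hat u_{n_1}(t)=\sum_{j=1}^{m}c_je^{\lambda_j t}$ with the $\lambda_j$ distinct, $c_j\neq 0$, and $|\lambda_j|=1$, and I first show the forward orbit is finite. From the tool above, $\hat u_{C^{k}(n_1)}(0)=\sum_{j}c_j\lambda_j^{\,k}$. If the integers $C^{k}(n_1)$ were pairwise distinct, Parseval would give $\sum_{k\ge0}|\hat u_{C^{k}(n_1)}(0)|^2\le\|u(0)\|^2<\infty$, forcing $\hat u_{C^{k}(n_1)}(0)\to0$; but for distinct unit-modulus $\lambda_j$ the Ces\`aro mean $\frac1N\sum_{k=0}^{N-1}\bigl|\sum_j c_j\lambda_j^{\,k}\bigr|^2$ converges to $\sum_j|c_j|^2>0$, a contradiction. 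Hence $(C^{k}(n_1))_k$ takes finitely many values and, being the orbit of a deterministic map, is eventually periodic, so a cycle exists. If $p$ is its length and $r$ the transient length, then $C^{k+p}(n_1)=C^{k}(n_1)$ for $k\ge r$ gives $\sum_j c_j\lambda_j^{\,k}(\lambda_j^{\,p}-1)=0$ for all such $k$, and the Vandermonde independence of $(\lambda_j^{\,k})_k$ forces $\lambda_j^{\,p}=1$; thus the $\lambda_j$ are $p$-th roots of unity and the exponents record the cycle length.

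The remaining and most delicate point is nontriviality. The trivial cycle already yields coordinates of the admissible form — restricted to $1\to2\to1$ the subsystem is $\ddot{\hat u}=\hat u$, so $\hat u=ae^{t}+be^{-t}$ with $\lambda=\pm1$ — so the equivalence must be understood with the $\lambda_j$ encoding the cycle: the cycle produced above is nontrivial precisely when $\{\lambda_j\}$ is not contained in the eigenvalue set of the trivial cycle, i.e. when some $\lambda_j$ is a primitive root of unity of order larger than the trivial period. I expect this bookkeeping to be the main obstacle, both in stating the correspondence cleanly and in certifying that genuinely complex, higher-order exponents cannot be produced by the trivial cycle; by contrast, the spectral computation for $A$ and the $L^2$/Ces\`aro dichotomy are routine.
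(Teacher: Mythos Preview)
Your approach matches the paper's: in the forward direction you both reduce to the closed $m\times m$ linear system on the cycle and read off that the exponents are the $m$-th roots of unity; in the converse you both derive a contradiction with $u(t)\in L^2$ if the forward orbit of $n_1$ were infinite. The only substantive difference is how the contradiction is reached. The paper uses the identity $\partial_t^{m}\hat u_{n_1}=\hat u_{n_1}$ (implicitly taking the $\lambda_j$ to be $m$-th roots of unity, not merely of modulus $1$) to conclude $\hat u_{C^{im}(n_1)}=\hat u_{n_1}$ for every $i$, so an infinite orbit would place one fixed nonzero value at infinitely many distinct Fourier modes. Your Ces\`aro argument works directly from the hypothesis $|\lambda_j|=1$ without assuming $\lambda_j^{m}=1$, and your subsequent Vandermonde step then recovers $\lambda_j^{p}=1$ from the eventual period $p$; this is a cleaner handling of the stated hypothesis, at the cost of one extra routine step.

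Your worry about nontriviality is well placed and is not resolved in the paper either: the paper's converse produces \emph{a} cycle through some $C^{i_0m}(n_1)$ but never argues that this cycle is nontrivial, and as you observe the trivial cycle itself already yields coordinates of the form \eqref{sol_EDO}. So the ``if and only if'' as literally stated cannot distinguish trivial from nontrivial cycles; one has to read the correspondence as you suggest, with the set $\{\lambda_j\}$ encoding the period.
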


Notice that \eqref{sol_EDO} exactly corresponds to \eqref{sol_EDO0}.
\begin{proof}
The implication from left to right follows from the following argument. Let us consider a nontrivial Collatz cycle 
\[
n_1\to n_2=C(n_1)\to \cdots \to n_m=C^m(n_1)=n_1.
\]
Consider the initial Collatz system \eqref{EDP}. Then, for each $i\in \left\lbrace 1, ..., m\right\rbrace$ one has  
\begin{equation}\label{subsisind}
\begin{cases}
\partial_t \hat{u}_{n_i} = \hat u_{C{(n_i)}}, \, \text{if } 1\leq i \leq m-1, \\
\partial_t \hat{u}_{n_i} =\hat u_{n_1}, \, \text{if }  i=m.
\end{cases}
\end{equation}
Thus $\partial_t \hat{u}_{n_1} = \hat u_{2}$, $\partial_t^2 \hat{u}_{n_1} = \hat u_{3}$, \ldots $\partial_t^m \hat{u}_{n_1}  =\hat{u}_{n_1}$.  Now $\partial_t^m \hat{u}_{n_1}  =\hat{u}_{n_1}$ implies the eigenvalue equation satisfies $\lambda^m=1$. Therefore, every root of this equation must satisfy $|\lambda|=1$. Let $\lambda_j \in \mathbb C$, $j=1,\ldots,m$, be these roots, counting multiplicity.  Thus  $\hat{u}_{n_1}$ is combination of complex exponential of the form 
\[
  \hat{u}_{n_1}=c_1e^{\lambda_1t}+c_2e^{\lambda_2t}+\cdots+c_me^{\lambda_mt}.
\]
Conversely, let $u$ the solution of \eqref{EDP} such that \eqref{subsisind} is satisfied, and 
where $C^k(n_1)\neq n_1 $ if $k=1,\cdots, m-1$. Let $n_j=C(n_{j-1})$ where $j=1,\cdots m-1$. Clearly $n_j=C^{j-1}(n_1)\neq n_1$ for $j=1,\cdots m-1$. Suppose that $C^{i\cdot m}(n_1)\neq n_1$ for all $i\in \mathbb{N}$. From \eqref{subsisind} we have for each $i$, $\partial_t^{i m} \hat{u}_{n_1} =\hat u_{C^{i m}(n_1)} =\hat{u}_{n_1}$. Note that taking in mind the positions $C^{im}(n_1)$ of $\hat u$  we obtain: 
\begin{align*}
  \|u\| ^2&\ge \sum_{i=1}^{\infty} |\hat{u}_{C^{im}(n_1)}|^2= \sum_{i=1}^{\infty} |\hat{u}_{n_1}|^2=\infty,
\end{align*}
a contradiction since $u\in L^2$. Thus there is $i_0\in \mathbb{N}$ such that $C^{i_0 m}(n_1)=n_1.$

Let $u$ a solution of \eqref{EDP} such that
\begin{equation}\label{n1}
  \hat{u}_{n_1}=c_1e^{\lambda_1t}+c_2e^{\lambda_2t}+\cdots+c_me^{\lambda_mt}.
\end{equation}
Note that $\partial_t\hat{u}_{n_1}=\hat{u}_{C(n_1)}$. Denotes $n_2=C(n_1)$ then we have $\partial_t\hat{u}_{n_2}=\hat{u}_{C(n_2)}$ in the last step by \eqref{n1} we obtain $\partial_t\hat{u}_{n_m}=\hat{u}_{C(n_m)}=\hat{u}_{C^m(n_1)}=\hat{u}_{n_1}$.
\end{proof}

\begin{lemma}\label{previa} Let $u$ be a solution of \eqref{EDP}. Suppose there is a divergent orbit 
\be\label{secue}
n_1\to n_2:=C(n_1)\to n_3:=C^2(n_1)\cdots\to n_j:=C^{j-1}(n_1)\to \cdots
\ee
Then 
\begin{equation}\label{diver}
\hat u_{n_j}(t)= \left\langle  \left( \hat u_{n_k} (0) \right)_{k\geq j} , \left( \frac{t^k}{k!}\right)_{k\in\mathbb N}\right\rangle_{\ell^2(\mathbb N)}.
\end{equation} 
\end{lemma}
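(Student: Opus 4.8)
The plan is to exploit that $\mathbf{C}$ is a \emph{bounded} linear operator on $L^2$ (with $\|\mathbf{C}\|=\sqrt2$ by the Remark), so that the solution of \eqref{EDP} is given explicitly by the uniformly convergent exponential series
\[
u(t)=e^{t\mathbf{C}}u_0=\sum_{m\ge 0}\frac{t^m}{m!}\,\mathbf{C}^m u_0,
\]
the convergence being in $L^2$ for every $t$ because $\bigl\|t^m\mathbf{C}^m u_0/m!\bigr\|\le (\sqrt2\,|t|)^m\|u_0\|/m!$. Since evaluation of the $n_j$-th Fourier coefficient, $v\mapsto \hat v_{n_j}$, is a bounded linear functional on $L^2$ (of norm $\le 1$), I may pass it inside the sum to obtain
\[
\hat u_{n_j}(t)=\sum_{m\ge 0}\frac{t^m}{m!}\,\bigl(\widehat{\mathbf{C}^m u_0}\bigr)_{n_j}.
\]

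Next I would identify the coefficient $\bigl(\widehat{\mathbf{C}^m u_0}\bigr)_{n_j}$. From the componentwise action $(\widehat{\mathbf{C}u})_n=\hat u_{C(n)}$ an immediate induction gives $(\widehat{\mathbf{C}^m u})_n=\hat u_{C^m(n)}$ for all $m\ge 0$ and all $n$. Along the orbit \eqref{secue} one has $C^m(n_j)=C^m\!\bigl(C^{j-1}(n_1)\bigr)=C^{j+m-1}(n_1)=n_{j+m}$, whence $\bigl(\widehat{\mathbf{C}^m u_0}\bigr)_{n_j}=\hat u_{0,n_{j+m}}=\hat u_{n_{j+m}}(0)$. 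Substituting back yields
\[
\hat u_{n_j}(t)=\sum_{m\ge 0}\hat u_{n_{j+m}}(0)\,\frac{t^m}{m!},
\]
which is precisely the series appearing in item $(ii)$ of Theorem \ref{MT}.

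Finally I would recast this series as the claimed $\ell^2$ inner product. Because $u_0\in L^2$ its Fourier coefficients lie in $\ell^2(\mathbb Z)$, and since the orbit \eqref{secue} diverges it contains no cycle and its terms $n_k$ are pairwise distinct; hence $(\hat u_{n_k}(0))_{k\ge j}$ is a genuine subsequence and belongs to $\ell^2(\mathbb N)$. The weights satisfy $\sum_{m\ge 0}(t^m/m!)^2<\infty$, so $(t^m/m!)_{m\ge0}\in\ell^2(\mathbb N)$ as well, and the pairing converges absolutely by Cauchy--Schwarz. After the reindexing $k=j+m$, and using that $t^m/m!$ is real so that the conjugation in $\langle\cdot,\cdot\rangle$ is harmless, the series above is exactly \eqref{diver}.

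The computation is essentially mechanical; the only point requiring care is the justification of moving the coordinate functional inside the infinite sum and the verification that both sequences lie in $\ell^2(\mathbb N)$, so that \eqref{diver} is well posed. I expect the main (mild) obstacle to be bookkeeping the two different index origins---the subsequence starting at $k=j$ versus the weights starting at $m=0$---and confirming that divergence of the orbit indeed rules out repeated indices, which is what guarantees summability. An alternative route avoiding the semigroup is to differentiate the chain $\partial_t\hat u_{n_j}=\hat u_{n_{j+1}}$ repeatedly to get $\partial_t^m\hat u_{n_j}(0)=\hat u_{n_{j+m}}(0)$ and then invoke analyticity of $t\mapsto u(t)$ (again a consequence of the boundedness of $\mathbf{C}$) to sum the Taylor series; this gives the same identity but requires controlling the Taylor remainder, so the semigroup formulation is cleaner.
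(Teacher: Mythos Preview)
Your proof is correct and takes a different route from the paper's. You compute $\hat u_{n_j}(t)$ directly from the exponential series $u(t)=e^{t\mathbf{C}}u_0$, pushing the bounded coordinate functional through the absolutely convergent sum and using the componentwise identity $(\widehat{\mathbf{C}^m u_0})_n=\hat u_{0,C^m(n)}$ along the orbit. The paper instead proceeds by verification and uniqueness: it \emph{defines} a candidate sequence $\hat v_{n_j}(t)=\langle\mathrm{Orb}_j(u),\mathrm{Exp}_\infty(t)\rangle$, checks by term-by-term differentiation that $\partial_t\hat v_{n_j}=\hat v_{n_{j+1}}$, notes that the actual solution satisfies the same recursion $\partial_t\hat u_{n_j}=\hat u_{n_{j+1}}$ with the same initial data, and appeals to ODE uniqueness. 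Your semigroup argument is shorter and avoids the separate verification step; the paper's argument is slightly more elementary in that it only uses uniqueness for the scalar ODE system rather than the operator exponential, and it is essentially the ``alternative route'' you sketch at the end (Taylor/analyticity), dressed up as a uniqueness argument rather than a remainder estimate. Both rely on the same structural input---boundedness of $\mathbf{C}$---and your explicit remark that divergence of the orbit forces the $n_k$ to be pairwise distinct, so that $(\hat u_{n_k}(0))_{k\ge j}\in\ell^2$, is a point the paper uses implicitly.
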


\begin{proof}
Let $t\in\mathbb R$.  For simplicity we denote  
 \begin{equation}\label{OrbExp}
 \text{Orb}_j(u)= \left( \hat u_{n_k} (0) \right)_{k\geq j}, \quad 
 \text{Exp}_\infty(t)= \left( \frac{t^k}{k!}  \right)_{k\in \mathbb N}.
\end{equation}  
Both are elements in $\ell^2 (\mathbb N)$. Let $\hat v(t):=\left(\hat v_{j}(t) \right)_{j\in\mathbb N}$, with
\[
\begin{cases}
    \hat v_{i}(t)=  \left\langle   \text{Orb}_j(u) ,  \text{Exp}_\infty(t) \right\rangle_{\ell^2(\mathbb N)},  & \hbox{$i=n_j$;} \\ 
    \hat v_i(t)=\hat u_i(t), & \hbox{$i\neq n_j$.}
\end{cases}
\]
Note that for each $t\ge 0$ the number $\hat v_{n_j}(t)\in \mathbb{C}$  is well defined since 
\[
\ba
|\hat v_{n_j}(t)|\le &~{} \|\text{Orb}_j(u)\|_{\ell^2} \| \text{Exp}_\infty(t)\|_{\ell^2} \\
\leq &~{} C \| u(0)\|  \| \text{Exp}_\infty(t)\|_{\ell^1} \leq C  \| u(0)\|  e^{t}.
\ea
\]  
Since  $u$ is a solution of \eqref{EDP},  and having in mind \eqref{secue}, we obtain the following system 
\begin{equation}\label{system1}
\partial_t \hat{u}_{n_j}  =\hat{u}_{C(n_j)} =\hat{u}_{n_{j+1}}, \quad j\in \mathbb{N}.
\end{equation}
On the other hand, using \eqref{diver} and \eqref{OrbExp}, 
\[
\ba
 \partial_t \hat{v}_{n_j}= &~{} \left\langle   \text{Orb}_j(u) ,  \partial_t \text{Exp}_\infty(t) \right\rangle_{\ell^2(\mathbb N)} =\left\langle   \text{Orb}_{j+1}(u) ,   \text{Exp}_\infty(t)    \right\rangle_{\ell^2(\mathbb N)} = \hat v_{n_{j+1}}. 
\ea
\]
Thus $\hat v$ is a solution of system \eqref{system1} and by uniqueness of the system \eqref{system1} for each $j\in \mathbb N$    $v_{n_j}=u_{n_j}$. Hence \eqref{diver} is obtained.                               
\end{proof}

As a corollary of the previous result we have item $(ii)$ in Theorem \ref{MT}.

\begin{corollary}
  There exists an unbounded sequence of Collatz $n_j=C(n_{j-1})$, $j\ge 1$, $n_j\to \infty$ if and only if the Collatz flow has a solution $(\hat u _n(t))_{n\ge 0}$ where in particular
\begin{equation}\label{forma}
  \hat u_{n_j}(t)=\sum_{k\ge 0}\hat u_{n_{j+k}}(0) \frac{t^k}{k!}.
\end{equation}
\end{corollary}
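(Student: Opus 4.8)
The plan is to read this corollary as the ``iff'' packaging of Lemma~\ref{previa} together with its converse, and to establish the two implications separately. For the forward implication, I would start from an unbounded Collatz orbit $n_j=C(n_{j-1})$, $j\ge 1$. Since this is the forward orbit of a deterministic map on $\mathbb N$, unboundedness is equivalent to the $n_j$ being pairwise distinct: a repetition $n_i=n_{i+p}$ with $p\ge 1$ would force eventual periodicity and hence a bounded orbit, and conversely a bounded orbit takes finitely many values and must repeat. Thus the orbit \eqref{secue} is divergent in exactly the sense required by Lemma~\ref{previa}. By the existence theorem of Section~\ref{2}, for any datum $u_0\in L^2$ there is a global solution $u$ of \eqref{EDP}; applying Lemma~\ref{previa} to this solution gives directly
\[
\hat u_{n_j}(t)=\big\langle (\hat u_{n_k}(0))_{k\ge j},\,(t^k/k!)_{k\in\mathbb N}\big\rangle_{\ell^2(\mathbb N)}=\sum_{k\ge 0}\hat u_{n_{j+k}}(0)\,\frac{t^k}{k!},
\]
which is precisely \eqref{forma}. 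Since $u_0$ is arbitrary, a solution of the stated form exists.

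For the converse I would observe that the representation \eqref{forma}, in the precise $\ell^2$ form of Lemma~\ref{previa}, is available exactly when the sequence $(\hat u_{n_{j+k}}(0))_{k\ge 0}$ belongs to $\ell^2(\mathbb N)$, since this square summability is what renders the inner product with $(t^k/k!)_k$ meaningful. If the orbit $n_j=C(n_{j-1})$ were bounded it would be eventually periodic, and then $(\hat u_{n_{j+k}}(0))_k$ would be, beyond some index, a repetition of a fixed finite block; such a sequence lies in $\ell^2$ only when that block vanishes, in which case the orbit-restricted data collapses and the characteristic divergent-series behaviour of \eqref{forma} is lost. In the periodic regime the coordinate is instead governed by the closed finite subsystem analysed in Theorem~\ref{Thm:per} and takes the unimodular-exponential form $\sum_i c_i e^{\lambda_i t}$ with $|\lambda_i|=1$. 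Hence a nontrivial solution coordinate genuinely of the form \eqref{forma} can occur only along an unbounded orbit, which yields the converse and hence the equivalence asserted in item $(ii)$ of Theorem~\ref{MT}.

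The step I expect to be delicate is this converse, where one must pin down the sense in which \eqref{forma} is characteristic of divergence. The pointwise power series $\sum_{k\ge 0}\hat u_{n_{j+k}}(0)\,t^k/k!$ converges for every $t$ even along a periodic orbit, where it simply collapses to a combination of unimodular exponentials; so the dichotomy between unbounded and bounded orbits cannot be read off from mere convergence of the series, but only from the validity of the underlying $\ell^2$ pairing of Lemma~\ref{previa}, equivalently from the square summability of $(\hat u_{n_k}(0))_k$, equivalently from the distinctness of the indices $n_k$. Making this threefold equivalence airtight, and correctly excluding the degenerate case of trivial orbit data, is the crux of the argument; the forward direction, by contrast, is immediate once Lemma~\ref{previa} and global existence are in hand.
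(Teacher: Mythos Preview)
Your forward direction matches the paper exactly: take any global solution from Section~\ref{2} and apply Lemma~\ref{previa} along the divergent orbit to obtain \eqref{forma}. The paper additionally records the refined estimate $\sum_{j\ge 1}|\hat u_{n_j}(t)|^2\le e^{2t}\|u(0)\|^2$ on the orbit coordinates, but this is a side observation rather than part of establishing the equivalence.

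On the converse the two approaches diverge in spirit. The paper does not try to deduce unboundedness of $(n_j)$ from \eqref{forma}; it takes the sequence $n_{j+1}=C(n_j)$ and the formula as given data, checks that \eqref{forma} yields $\partial_t\hat u_{n_j}=\hat u_{n_{j+1}}=\hat u_{C(n_j)}$, and then simply solves $\partial_t\hat u_k=\hat u_{C(k)}$ on the remaining coordinates to build a full Collatz flow solution. In other words, the paper reads the converse as a construction statement (``\eqref{forma} can be embedded in a solution'') rather than a rigidity statement (``\eqref{forma} forces divergence''). Your treatment is more probing: you correctly observe that the bare power-series identity \eqref{forma} is the Taylor expansion of an analytic coordinate and so holds along \emph{any} orbit, periodic or not --- along a cycle it just collapses to the unimodular-exponential form of Theorem~\ref{Thm:per}. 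Your proposed resolution via the $\ell^2$ pairing of Lemma~\ref{previa} together with nontriviality of the orbit data is a reasonable way to isolate what is distinctive about the divergent case, though it imposes an interpretation beyond the literal statement. In short, your converse is more careful than the paper's, and your concern about what exactly \eqref{forma} characterises is well founded; the paper's argument sidesteps rather than answers it.
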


The previous formula establishes that, in the case of a diverging Collatz orbit, the corresponding coordinates in the Collatz flow will have better growth estimates, since the starting time independent coefficient in \eqref{forma} is the one at time zero that is at the position $n_{j+0}$. The remaining coefficients will be of type $n_{j+k}$, namely, growing in size, and since the original data is in $L^2$, they will naturally decay faster than expected. We can conclude then that the solution, at the coordinates indicated by the hypothesized Collatz unbounded sequence, will behave better than normal solutions which have strong mixing and decay to the trivial cycle. 

\begin{proof}
The necessary condition is consequence of Lemma \ref{previa}, the nonperiodic character of the sequence $C^j(n_1)$, and the fact that convergence is uniform for each $t$,
\[
\sum_{j\ge 1}  |\hat u_{n_j}(t)|^2 =\sum_{k,k' \ge 0} \frac{t^{k+k'}}{k!k'!}   \sum_{j\ge 1}  \hat u_{n_{j+k}}(0) \overline{\hat u_{n_{j+k'}}(0) } \leq e^{2t} \| u(0)\| ^2.
\]
The remaining terms of the series are naturally bounded by $e^{\sqrt{2} t}$ as in \eqref{CotaExpo}. As for the sufficient condition, we use \eqref{system1} as follows: by hypothesis \eqref{forma}, $\partial_t \hat{u}_{n_j}  =\hat{u}_{n_{j+1}}$, and then \eqref{secue} leads to $\partial_t \hat{u}_{n_j} =\hat{u}_{C(n_j)} $. For the rest of coordinates, we simply solve $\partial_t \hat{u}_{k} =\hat{u}_{C(k)}$ with a particular initial condition related to the coordinate $k$. The proof is complete. 
\end{proof}
Now we finally consider the most common state of art according to existing numerics, which is the case of a number $n$ converging through Collatz iterations, after $\ell$ steps, into a cycle of size $k$. This is the case of the classical Collatz operator with $\alpha=3$ and $\beta=1$, where the trivial cycle is $1\mapsto 2$ ($k=2$), and for any natural number up to $10^{20}$.  

\begin{lemma}\label{clasico} Let $n\in\mathbb N$ and consider the associated Collatz sequence $(n_j)_{j\geq 1}$, $n_{j+1}=C(n_j)$, $j\geq 1$ achieving for the first time the cycle of size $m$ after $\ell$ steps. Then the associated coordinate $\hat u_n(t)$ of the corresponding PDE flow \eqref{EDP} satisfies 
\begin{equation}\label{desenrrollar}
\ba
\hat u_n(t)= &~{} \sum_{k=0}^{\ell-1}  \left( \hat{u}_{\hat n_{k}} (0) -  \sum_{j=1}^m \frac{c_j}{\lambda_j^{\ell-k}} \right)\frac{t^{k}}{k!}  + \sum_{j=1}^m \frac{c_j}{\lambda_j^\ell}e^{\lambda_jt}.
\ea
\end{equation}
with $\hat n_0=n$, $ c_j \in \mathbb{C}$, $\left|\lambda_j\right|=1$, $j=1,2, \ldots, m$ and some $m \geq 1$.
\end{lemma}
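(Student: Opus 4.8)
The plan is to exploit that, componentwise, \eqref{EDP} reads $\partial_t \hat u_k = \hat u_{C(k)}$, so that along the orbit $\hat n_k := C^k(n)$ (with $\hat n_0 = n$) one has the genuine chain $\partial_t \hat u_{\hat n_k} = \hat u_{\hat n_{k+1}}$, exactly as in \eqref{system1}. By hypothesis the indices $\hat n_0,\dots,\hat n_{\ell-1}$ are pre-periodic and pairwise distinct, while $\hat n_\ell,\hat n_{\ell+1},\dots$ lie on the cycle of size $m$, with $\hat n_{\ell+m}=\hat n_\ell$. The idea is to solve this finite triangular-plus-cyclic system starting from the cycle and integrating one coordinate at a time, backwards through the transient.

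First I would treat the cycle coordinate as the base of the recursion. Since $\hat n_\ell$ sits on a cycle of size $m$, iterating the chain over the cyclic block gives $\partial_t^m \hat u_{\hat n_\ell} = \hat u_{C^m(\hat n_\ell)} = \hat u_{\hat n_\ell}$, precisely the situation analyzed in the proof of Theorem \ref{Thm:per}. Hence the characteristic equation is $\lambda^m=1$, whose roots all satisfy $|\lambda_j|=1$, and
\[
\hat u_{\hat n_\ell}(t) = \sum_{j=1}^m c_j e^{\lambda_j t}, \qquad c_j\in\mathbb C,\ |\lambda_j|=1 .
\]
This fixes the exponential part of the solution; the remaining transient coordinates are then recovered purely by quadrature.

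Next I would run a descending induction on $k=\ell,\ell-1,\dots,0$, establishing the general identity
\[
\hat u_{\hat n_k}(t) = \sum_{i=0}^{\ell-1-k}\Bigl(\hat u_{\hat n_{k+i}}(0)-\sum_{j=1}^m \frac{c_j}{\lambda_j^{\,\ell-k-i}}\Bigr)\frac{t^i}{i!} + \sum_{j=1}^m \frac{c_j}{\lambda_j^{\,\ell-k}}\,e^{\lambda_j t},
\]
which reduces to \eqref{desenrrollar} at $k=0$. The base case $k=\ell$ is the displayed cycle formula, with the polynomial sum empty. For the inductive step I would use $\hat u_{\hat n_k}(t)=\hat u_{\hat n_k}(0)+\int_0^t \hat u_{\hat n_{k+1}}(s)\,ds$ and feed in the formula for $k+1$; the two elementary primitives $\int_0^t \frac{s^i}{i!}\,ds=\frac{t^{i+1}}{(i+1)!}$ and $\int_0^t e^{\lambda_j s}\,ds=\frac1{\lambda_j}\bigl(e^{\lambda_j t}-1\bigr)$ shift the polynomial index by one and lower the exponential coefficient $c_j/\lambda_j^{\,\ell-k-1}$ to $c_j/\lambda_j^{\,\ell-k}$, while the boundary constants $-\sum_j c_j/\lambda_j^{\,\ell-k}$ coming from the lower endpoint combine with $\hat u_{\hat n_k}(0)$ to build exactly the $i=0$ term. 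Uniqueness for the linear system \eqref{system1} then identifies this expression with the true coordinate $\hat u_{\hat n_k}$ of the flow.

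I expect the only genuine care to lie in the bookkeeping of the exponents $\lambda_j^{\,\ell-k-i}$ across the inductive step, rather than in any conceptual difficulty. As a consistency check one may observe that $\partial_t^\ell \hat u_n = \hat u_{\hat n_\ell}$ together with $\partial_t^m \hat u_{\hat n_\ell}=\hat u_{\hat n_\ell}$ forces $\hat u_n$ to solve the constant-coefficient ODE with characteristic polynomial $\lambda^\ell(\lambda^m-1)$: the root $\lambda=0$ of multiplicity $\ell$ accounts for the polynomial block $\{t^k/k!\}_{k=0}^{\ell-1}$, and the $m$-th roots of unity account for the exponentials. Matching $\partial_t^\ell \hat u_n(0)$ against $\hat u_{\hat n_\ell}(0)$ gives the coefficients $c_j/\lambda_j^{\ell}$, while the lower-order data $\partial_t^k \hat u_n(0)=\hat u_{\hat n_k}(0)$ reproduce the bracketed corrections, confirming \eqref{desenrrollar}.
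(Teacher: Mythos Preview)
Your proposal is correct and follows essentially the same route as the paper: first invoke Theorem~\ref{Thm:per} on the cycle to get $\hat u_{\hat n_\ell}(t)=\sum_j c_j e^{\lambda_j t}$ with $|\lambda_j|=1$, then integrate backward through the transient one step at a time using $\hat u_{\hat n_k}(t)=\hat u_{\hat n_k}(0)+\int_0^t \hat u_{\hat n_{k+1}}(s)\,ds$. The paper carries out the first three integrations explicitly and then asserts the pattern, whereas you package the same computation as a clean descending induction with the general formula for $\hat u_{\hat n_k}$ stated; your added remark on the characteristic polynomial $\lambda^\ell(\lambda^m-1)$ is a pleasant sanity check but not part of the paper's argument.
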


\begin{proof}
Following the first part of the proof of Theorem \ref{Thm:per}, let us consider a Collatz cycle 
\[
n_1\to n_2=C(n_1)\to \cdots \to n_m=C^m(n_1)=n_1.
\]
Thus  $\hat{u}_{n_1}$ is combination of complex exponential of the form 
\[
  \hat{u}_{n_1}(t)=c_1e^{\lambda_1t}+c_2e^{\lambda_2t}+\cdots+c_me^{\lambda_mt}.
\]
Now we notice that $n_1=C^\ell(n)$. Let $\hat n_0:= n\to \hat n_1:= C(n)\to \hat n_2:= C(\hat n_1) \to \cdots \to \hat n_{\ell-1} := C(\hat n_{\ell-2}) \to  \hat n_\ell := C(\hat n_{\ell-1}) = n_1$ be the sequence leading to $n_1$ from $n$ in $\ell$ steps. Then $\partial_t \hat{u}_{\hat n_{\ell-1}} = \hat u_{n_1}$, $\partial_t \hat{u}_{\hat n_{\ell-2}} = \hat u_{\hat n_{\ell-1}}$ and so on up to $\partial_t \hat{u}_n = \hat u_{\hat n_1}$. Integrating in time once, and noticing that $|\lambda_j|=1$ for all $j$, we get
\[
\hat{u}_{\hat n_{\ell-1}} (t) =\hat{u}_{\hat n_{\ell-1}} (0) +  \frac{c_1}{\lambda_1}(e^{\lambda_1t}-1) +\frac{c_2}{\lambda_2}(e^{\lambda_2t}-1) +\cdots+  \frac{c_m}{\lambda_m}(e^{\lambda_mt}-1).
\]
Repeating this algorithm, and assuming $\ell \geq 2$,
\[
\hat{u}_{\hat n_{\ell-2}} (t) = \hat{u}_{\hat n_{\ell-2}} (0) +  \left( \hat{u}_{\hat n_{\ell-1}} (0)   -\sum_{j=1}^m \frac{c_j}{\lambda_j} \right)t  + \sum_{j=1}^m \frac{c_j}{\lambda_j^2}(e^{\lambda_jt}-1). 
\]
A third iteration, if allowed, gives
\[
\ba
\hat{u}_{\hat n_{\ell-3}} (t) = &~{} \hat{u}_{\hat n_{\ell-3}} (0) +  \left( \hat{u}_{\hat n_{\ell-2}} (0)   -\sum_{j=1}^m \frac{c_j}{\lambda_j^2} \right) t \\
&~{}  +  \frac12\left( \hat{u}_{\hat n_{\ell-1}} (0)   -\sum_{j=1}^m \frac{c_j}{\lambda_j} \right)t^2  + \sum_{j=1}^m \frac{c_j}{\lambda_j^3}(e^{\lambda_jt}-1). 
\ea
\]
Continuing $\ell-3$ times, we get \eqref{desenrrollar}.
\end{proof}

The previous result shows that pure polynomial behavior in Collatz evolutions is perfectly possible if the exponential part is made zero at the beginning in \eqref{desenrrollar}. In that sense, the conjectured bound
\[
\| u(t)\|   \geq c_0 e^{t} \| u(0)\| ,
\]  
coming from the fact that $\|{\bf C} (u)\| \geq \|u\|$, seems not valid. {\color{black} However, in some sense one can approach the maximum growth rate. Indeed, assume the classical Collatz operator with $\alpha=3$ and $\beta=1$. Let us choose the initial condition $u_0=\delta_{5,k}$, where $\delta_{j,k}$ is the Kronecker delta. We must then solve $\dot{\hat u}_k = \hat u_{C(k)}$. Then $u_1(t)=u_2(t)=0$ and for $k\geq 3$ one has $u_k(t) = \frac{t^{m_k}}{m_k!}$, where $m_k$ is the number of times under which $k$ achieves the value 5 under the Collatz application, i.e. $C^{m_k}(k)=5$. For instance, $m_{13}=3$, since $C^3(13)=5$. A crude estimate on the norm of $u(t)$ is given by
\[
\| u(t)\| ^2 =\sum_{k\geq 1} c_k \frac{t^{2m_k}}{(m_k!)^2} \leq \sum_{k\geq 0} 2^k \frac{t^{2k}}{(k!)^2},
\]
where $c_k$ is an integer between $0$ and $2^k$. The right hand side corresponds to $I_0(2\sqrt{2}t)$, where $I_0$ is the modified Bessel function of first kind. Its long time behavior is given by $I_0(x)\sim \frac{e^x}{\sqrt{2\pi x}}$ as $x\to +\infty.$ Consequently, $I_0(2\sqrt{2}t) e^{2\sqrt{2}t} \sim \frac1{\sqrt{t}}$, leading to an order that it is close to the maximal rate of growth.

}
\section{Discrete time evolution of the Collatz operator}\label{3}

In this section our objective is to prove Theorem \ref{MT2}. For this, we first start with a particular but classical formula. 

\subsection{Preliminaries} In the following we consider the accelerated Collatz type operator defined by \eqref{C_alpha}. 

\begin{lemma}\label{deco_coef}
For any $k \in \mathbb{N}$ there exist numbers $a(i,k), b(i,k) \in \mathbb{N}$, $i = 0, \cdots, 2^{k}-1$ such that 
\begin{equation}\label{modj}
C^k(n) = \dfrac{1}{2^k} \left( a(i,k)n+b(i,k) \right), \quad n \equiv i \mod 2^{k}.
\end{equation}
Moreover, the coefficients $a(i,k)$ are odd (in fact, they are powers of $\alpha$), and both $a(i,k)$ and $b(i,k)$ satisfy the following relations
\begin{equation}\label{sum}
\ba
& \sum_{i = 0}^{2^k-1}a(i,k) = (\alpha+1)^{k}, \\
&  \sum_{i = 0}^{2^k-1}b(i,k) = \begin{cases}
		\beta\left(\dfrac{(\alpha+1)^k-4^k}{\alpha +1-4}\right), & \alpha \neq 3 \vspace{2mm}\\
		\beta k (\alpha+1)^{k-1}  , & \alpha = 3. \end{cases} 
\ea
\end{equation}
\end{lemma}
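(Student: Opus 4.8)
The plan is to prove everything by induction on $k$, tracking simultaneously the affine representation \eqref{modj}, the fact that $a(i,k)$ is a power of $\alpha$ (hence odd), and the two summation identities \eqref{sum}. For the base case $k=0$ (or $k=1$) the statement is immediate: $C^0(n)=n$ gives $a(0,0)=1$, $b(0,0)=0$, and one checks both sums directly. For the inductive step I would fix $n\equiv i'\pmod{2^{k+1}}$ and write $i'=i+\varepsilon\,2^k$ with $i=i'\bmod 2^k$ and $\varepsilon\in\{0,1\}$. By the inductive hypothesis $2^k C^k(n)=a(i,k)n+b(i,k)$, and whether the next Collatz step halves or applies the $(\alpha\,\cdot+\beta)/2$ branch is decided by the parity of $C^k(n)$. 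Since $C^k(n)$ is an integer, $a(i,k)n+b(i,k)\equiv 0\pmod{2^k}$, and the parity of $C^k(n)$ equals the bit $\big(a(i,k)i'+b(i,k)\bmod 2^{k+1}\big)/2^k$, which depends only on $n$ modulo $2^{k+1}$, i.e. only on $i'$. Accordingly I obtain the update rules: if $C^k(n)$ is even then $a(i',k+1)=a(i,k)$ and $b(i',k+1)=b(i,k)$; if $C^k(n)$ is odd then $a(i',k+1)=\alpha\,a(i,k)$ and $b(i',k+1)=\alpha\,b(i,k)+\beta\,2^k$. In particular $a(i',k+1)\in\{a(i,k),\,\alpha\,a(i,k)\}$ remains a power of $\alpha$, closing that part of the induction.

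The crux of the argument is the following parity-splitting claim: for each residue $i$ modulo $2^k$, exactly one of its two lifts $i,\,i+2^k$ modulo $2^{k+1}$ produces an even $C^k(n)$ and the other an odd one. This is where the oddness of $a(i,k)$ is essential: passing from $i'=i$ to $i'=i+2^k$ changes $a(i,k)i'+b(i,k)$ by $a(i,k)\,2^k\equiv 2^k\pmod{2^{k+1}}$ (because $a(i,k)$ is odd), so the relevant bit flips. I expect this step to be the main obstacle, since the clean value $(\alpha+1)^k$ for the $a$-sum and the clean recursion for the $b$-sum both hinge on this pairing of children.

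Granting the splitting, the summation identities follow by collecting the two children of each $i$. For the $a$-sum, each $i$ contributes $a(i,k)+\alpha\,a(i,k)=(1+\alpha)a(i,k)$, whence $\sum_{i'}a(i',k+1)=(1+\alpha)\sum_i a(i,k)=(\alpha+1)^{k+1}$. For the $b$-sum, the even child contributes $b(i,k)$ and the odd child $\alpha\,b(i,k)+\beta\,2^k$, so summing over the $2^k$ values of $i$ gives the first-order linear recursion
\be
B_{k+1}=(1+\alpha)B_k+\beta\,4^k,\qquad B_k:=\sum_{i=0}^{2^k-1}b(i,k),\quad B_0=0.
\ee

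Finally I would solve this recursion in the two regimes. For $\alpha\neq 3$ the forcing term $4^k$ is non-resonant: a particular solution $D\,4^k$ forces $(3-\alpha)D=\beta$, i.e. $D=\beta/(3-\alpha)$, and matching $B_0=0$ yields $B_k=\beta\big((\alpha+1)^k-4^k\big)/(\alpha+1-4)$, exactly the stated formula. For $\alpha=3$ we have $\alpha+1=4$ and the forcing resonates with the homogeneous solution; setting $b_k:=B_k/4^k$ turns the recursion into $b_{k+1}=b_k+\beta/4$, so $b_k=k\beta/4$ and $B_k=\beta\,k\,4^{k-1}=\beta\,k(\alpha+1)^{k-1}$. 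This completes the induction and hence the proof.
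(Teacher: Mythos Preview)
Your proof is correct and follows essentially the same inductive scheme as the paper: the same update rules for $a$ and $b$, the same derivation that $a(i,k)$ is a power of $\alpha$, the same recursion $B_{k+1}=(1+\alpha)B_k+\beta\,4^k$ and its solution in the two cases. The one point worth noting is that your parity-splitting argument is actually more self-contained than the paper's: where the paper asserts that the two lifts $i$ and $i+2^k$ yield opposite parities of $C^k(n)$ by pointing to the parity-vector formalism (which is only justified afterwards via Terras' result), you prove it directly from the oddness of $a(i,k)$, since $a(i,k)\cdot 2^k\equiv 2^k\pmod{2^{k+1}}$ flips the relevant bit. That is a cleaner closure of the induction loop.
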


\begin{remark}
It will be clear from the proof of Lemma \ref{deco_coef} that that all the values of $ a(j,k) $ are powers of $\alpha$, but also independent of $\beta$. This fact will be important in forthcoming results below.
\end{remark}
\begin{proof}
	By induction on $k$, the case $k = 1$ clearly holds, because $C^1_{\alpha, \beta}(n) =C_{\alpha, \beta}(n)$ is given by \eqref{C_alpha}, and then we have $a(0,1)=1$ and $a(1,1)=\alpha$. Similarly $b(0,1)=0$ and $b(1,1)=\beta$.
	
	\medskip
	
	Now suppose  that the result holds for a fixed  $k \in \NN$. Note that for each $n \in \NN$ we can characterize its congruence class modulo $2^{k+1}$ knowing its congruence class modulo $2^k$ and the parity of $C^k(n)$. To see this take $p \in \NN$ such that 
	\[
	p2^{k+1} \leq n < (p+1)2^{k+1}.
	\] 
	If $n \equiv j \pmod{2^{k}}$ then $n$ can only take two possible values, namely for some $j \in \mathbb N\cap [0, 2^k-1]$
	\[ 
	n = p2^{k+1}+j \quad \text{or} \quad n = p2^{k+1}+j+2^k,
	\]
	but we know that, since the two last values for $n$ differ by $2^k$, when evaluating $C^k(n)$ we will obtain different parities (see \eqref{parity_seq}). Thus, using the induction hypothesis we can evaluate $C^{k+1}(n)$ for $n \equiv j \pmod{2^{k}}$, obtaining
	\[ 
	C^{k+1}(n) = \begin{cases}
		\dfrac{a(j,k) n+b(j,k)}{2^{k+1}}, &  C^{k}(n) \equiv 0 \hspace{-2mm}\pmod{2} \vspace{0.2cm} \\
		\dfrac{\alpha  a(j,k)  n+\alpha  b(j,k) +2^{k}\beta}{2^{k+1}}, &  C^{k}(n) \equiv 1 \pmod{2}.
	\end{cases}
	\]
	In this way, we obtain the coefficients for $C^{k+1}$, which are given by 
	\begin{align}\label{coef a}
		a(j,k+1) = \begin{cases}
			a(j,k), & C^k(n) \equiv 0 \pmod{2} \\
			\alpha a(j,k), & C^k(n) \equiv 1 \pmod{2}.
		\end{cases}
	\end{align}
	Also,
	\begin{align}\label{coef b}
		b(j,k+1) = \begin{cases}
			b(j,k), & C^k(n) \equiv 0 \pmod{2} \\
			\alpha  b(j,k) +2^{k}\beta, & C^k(n) \equiv 1 \pmod{2}.
		\end{cases}
	\end{align}
	Since $a(0,1) = 1, a(1,1) = \alpha$, it follows that $a(j,k)$ is always a power of $\alpha$. The precise value of this power is not needed, only a particular sum that will be computed now.
	This proves \eqref{modj} and the first part of the lemma. 
	
	Regarding \eqref{sum}, we proceed again by induction. Assume 
	\[ 
	\sum_{j = 0}^{2^k-1}a(j,k) = (\alpha+1)^k,
	\]then summing all the coefficients in \eqref{coef a} we obtain
	\[
	\sum_{j = 0}^{2^{k+1}-1}a(j,k+1) = (\alpha+1)\sum_{j = 0}^{2^{k}-1}a_{j}^{k} = (\alpha +1)^{k+1}.
	\]
	This proves the first identity in \eqref{sum}. Finally, if we denote 
	\[ 
	s_k = \sum_{j = 0}^{2^k-1}b(j,k),
	\]
	taking the sum of the coefficients in \eqref{coef b} we see that this sequence must satisfy the following recurrence
	\[ 
	s_{k+1}= \sum_{j = 0}^{2^{k+1}-1}b(j,k+1) = (\alpha+1)s_k+4^k\beta.
	\]Since we have $s_1 = \beta$ the solution of the recurrence is given by
	\[
	s_k = \begin{cases}
		\beta\left(\dfrac{(\alpha+1)^k-4^k}{\alpha +1-4}\right), & \alpha \neq 3 \vspace{2mm}\\
		\beta k (\alpha+1)^{k-1}  , & \alpha = 3.
	\end{cases}
	\]
	This finishes the proof of \eqref{sum} and the lemma.
\end{proof}

Let \(\mathcal P:\mathbb N\to\{0,1\}\) be the parity map, \(\mathcal P(x):=x\bmod 2\).
Following Terras \cite{Terras}, the (length-\(k\)) \emph{parity vector} of \(n\) with respect to the  map \(C\) is  
\begin{equation}\label{parity_seq}
\mathcal P_k(n):=\bigl(\mathcal P(n),
                        \mathcal P\bigl(C(n)\bigr),
                        \dots,
                        \mathcal P\bigl(C^{\,k-1}(n)\bigr)\bigr)
\in\{0,1\}^{k}.
\end{equation}

\begin{lemma}
The mapping $\mathcal{P}_k:[1,2^{k}] \cap \mathbb N \longrightarrow \{0,1\}^{k}$, defined in \eqref{parity_seq} is bijective.
\end{lemma}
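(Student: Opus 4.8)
The plan is to prove injectivity and then invoke a cardinality count: both the domain $[1,2^{k}]\cap\mathbb N$ and the codomain $\{0,1\}^{k}$ have exactly $2^{k}$ elements, so any injection between them is automatically a bijection. The whole argument runs by induction on $k$, the base case $k=1$ being immediate since $\mathcal P_1(n)=(n\bmod 2)$ sends $2\mapsto(0)$ and $1\mapsto(1)$.

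For the inductive step I would first record the elementary stability fact that, for $0\le j\le k-1$, the parity $\mathcal P\bigl(C^{\,j}(n)\bigr)$ depends only on $n\bmod 2^{\,j+1}$, hence only on $n\bmod 2^{k}$. Consequently the truncation of $\mathcal P_{k+1}$ to its first $k$ coordinates coincides with $\mathcal P_{k}$ and depends only on the residue of $n$ modulo $2^{k}$. By the induction hypothesis $\mathcal P_{k}$ is a bijection on the complete residue system $[1,2^{k}]$ modulo $2^{k}$, so two integers share the same length-$k$ parity vector if and only if they are congruent modulo $2^{k}$.

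Now take $n_1,n_2\in[1,2^{k+1}]$ with $\mathcal P_{k+1}(n_1)=\mathcal P_{k+1}(n_2)$. Comparing the first $k$ coordinates and using the previous paragraph yields $n_1\equiv n_2\equiv i\pmod{2^{k}}$ for a common residue $i$. The key step, and the one I expect to be the crux, is to show that the final coordinate forces $n_1\equiv n_2\pmod{2^{k+1}}$. For this I would invoke the affine representation \eqref{modj} of Lemma \ref{deco_coef}: since both integers lie in the class $i$ modulo $2^{k}$,
\[
C^{k}(n_1)-C^{k}(n_2)=\frac{a(i,k)}{2^{k}}\,(n_1-n_2).
\]
If $n_1\not\equiv n_2\pmod{2^{k+1}}$, then $n_1-n_2=2^{k}m$ with $m$ odd, so the difference above equals $a(i,k)\,m$, which is odd because $a(i,k)$ is a power of $\alpha$ (hence odd) by Lemma \ref{deco_coef}. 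Thus $C^{k}(n_1)$ and $C^{k}(n_2)$ would have opposite parities, contradicting the equality of the $(k+1)$-th coordinate. Hence $n_1\equiv n_2\pmod{2^{k+1}}$, and since both lie in $[1,2^{k+1}]$ we conclude $n_1=n_2$. This establishes injectivity of $\mathcal P_{k+1}$, and the cardinality remark closes the induction.

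The genuine obstacle is precisely the separation of the two lifts $i$ and $i+2^{k}$ modulo $2^{k+1}$ by the last parity bit; everything else is bookkeeping. It is worth emphasizing that this mechanism is exactly the one already exploited inside the proof of Lemma \ref{deco_coef}, where the two candidate values of $n$ differing by $2^{k}$ were observed to produce different parities of $C^{k}(n)$; the oddness of the multiplier $a(i,k)$ is thus the single arithmetic fact doing the real work.
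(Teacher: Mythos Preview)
Your proof is correct and rests on the same two arithmetic facts as the paper's: the length-$k$ parity vector depends only on $n\bmod 2^{k}$, and the parity of $C^{k}(n)$ flips when $n$ is shifted by $2^{k}$ (equivalently, $a(i,k)$ is odd). The only difference is one of presentation: the paper packages these as the property $\mathcal P_{k}(n+2^{k})=\mathcal P_{k}(n)$, $\mathcal P(C^{k}(n+2^{k}))=1-\mathcal P(C^{k}(n))$ and then argues \emph{surjectivity} constructively (adjusting bits one at a time by adding or withholding the powers $2^{a}$), whereas you run an inductive \emph{injectivity} argument and extract the same two facts directly from Lemma~\ref{deco_coef}. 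Either direction is equivalent once the cardinality count is invoked, so the two proofs are duals of one another rather than genuinely different; your version has the mild advantage of making the dependence on the oddness of $a(i,k)$ fully explicit.
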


\begin{proof}
For \(\alpha=3,\;\beta=1\) the claim is exactly
\cite[Cor.~1.3]{Terras}. For the remaining  cases Terras’ argument still yields. Indeed for every  $n\in \mathbb{N}$ we have
\be\label{ter}  \mathcal P_{k}(n+2^k)=\mathcal P_{k}(n)\quad \mathcal P(C^k(n+2^k))=1-\mathcal P(C^k(n+2^k)).\ee Starting from \(n_0=1\) (whose vector is \(\mathcal P_k(n_0)\)),
property \eqref{ter} shows that adding \(2^{a}\) flips exactly the
\(a\)-th component of \(\mathcal P_k\) and leaves the earlier ones unchanged.
Hence, by adding (or not) each power \(2^{a}\) with \(0\le a<k\),
we can reach every one of the \(2^{k}\) binary vectors in \(\{0,1\}^k\),
and always inside the interval \([1,2^{k}]\).
This proves surjectivity; since domain and codomain have the same size,
\(\mathcal P_k\) is bijective.
\end{proof}
For example in the case $\alpha=5$ and $\beta=1$ 
all eight length-\(3\) parity vectors occur once, and only once, among the integers \(1\le n\le 8\):
\(n=1\) produces \((1,1,0)\);
\(n=2=1+2^{0}\) gives \((0,1,1)\);
\(n=3=1+2^{1}\) yields \((1,0,0)\);
\(n=4=1+2^{0}+2^{1}\) gives \((0,0,1)\);
\(n=5=1+2^{2}\) yields \((1,1,1)\);
\(n=6=1+2^{0}+2^{2}\) gives \((0,1,0)\);
\(n=7=1+2^{1}+2^{2}\) yields \((1,0,1)\);
and \(n=8=1+2^{0}+2^{1}+2^{2}\) gives the remaining vector \((0,0,0)\).
Thus the map \(\mathcal P_3\) attains each binary vector exactly once on the interval \([1,8]\).

\subsection{Energy formula} Now we can prove Theorem \ref{MT2}, item $(i)$, namely the energy formula \eqref{Energy_conservation}.

\begin{lemma}[Energy conservation]\label{lemma: energy conservation}
	Let $k \in \mathbb{N}$ be fixed. Then for all $n\in\mathbb N,$ 
	\be\label{cons_energy}
	\sum_{j = 0}^{2^{k}-1} \left( C^k(n+j+ 2^k)-C^k(n+j)  \right)= (1+\alpha)^k.
	 \ee
\end{lemma}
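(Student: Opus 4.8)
The plan is to reduce everything to the affine representation of $C^k$ furnished by Lemma \ref{deco_coef}, and then exploit a cancellation in the difference together with a counting argument over residue classes. The crucial structural observation is that $n+j$ and $n+j+2^k$ are congruent modulo $2^k$, so they fall into the \emph{same} residue class $i$, and hence are governed by the same pair of coefficients $a(i,k)$ and $b(i,k)$.

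First I would fix $n,k$ and let $j$ range over $\{0,1,\dots,2^k-1\}$. For each such $j$, set $i := (n+j)\bmod 2^k$. Since $n+j+2^k \equiv n+j \equiv i \pmod{2^k}$, Lemma \ref{deco_coef} applies to both arguments with the \emph{identical} coefficients, giving
\[
C^k(n+j+2^k) = \frac{1}{2^k}\bigl(a(i,k)(n+j+2^k)+b(i,k)\bigr),
\qquad
C^k(n+j) = \frac{1}{2^k}\bigl(a(i,k)(n+j)+b(i,k)\bigr).
\]
Subtracting, the $b(i,k)$ terms cancel and the $a(i,k)$ terms differ only through the shift $2^k$, so that
\[
C^k(n+j+2^k)-C^k(n+j) = \frac{1}{2^k}\,a(i,k)\cdot 2^k = a(i,k), \qquad i=(n+j)\bmod 2^k.
\]

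Next I would observe that as $j$ runs through $\{0,1,\dots,2^k-1\}$, the residue $(n+j)\bmod 2^k$ runs through every element of $\{0,1,\dots,2^k-1\}$ exactly once (it is simply a cyclic shift of the complete residue system). Therefore the left-hand side of \eqref{cons_energy} equals
\[
\sum_{j=0}^{2^k-1} a\bigl((n+j)\bmod 2^k,\,k\bigr) = \sum_{i=0}^{2^k-1} a(i,k) = (\alpha+1)^k,
\]
where the last equality is precisely the first identity in \eqref{sum}. This establishes \eqref{cons_energy}.

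The argument is essentially mechanical once Lemma \ref{deco_coef} is in hand; there is no genuine obstacle. The only point demanding a moment of care is verifying that both $n+j$ and $n+j+2^k$ are eligible for the \emph{same} coefficients, which is where the congruence modulo $2^k$ does all the work and is exactly why the shift is taken to be $2^k$ rather than an arbitrary amount. The cancellation of the $b$-terms also explains, a posteriori, why the energy formula is independent of $\beta$, as remarked in the discussion following Theorem \ref{MT2}.
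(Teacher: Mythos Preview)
Your proof is correct and follows essentially the same approach as the paper's own proof: both rely on the congruence $n+j+2^k\equiv n+j\pmod{2^k}$ to invoke the same coefficients from Lemma~\ref{deco_coef}, cancel the $b$-terms, and reduce to the sum $\sum_i a(i,k)=(\alpha+1)^k$. Your version is slightly more explicit about why the residues $(n+j)\bmod 2^k$ exhaust all of $\{0,\dots,2^k-1\}$, a step the paper takes silently.
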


\begin{proof}
	Since $n+j+2^k \equiv n+j \pmod{2^k}$ we have
	\[
	\ba		
	& \sum_{j = 0}^{2^{k}-1} \left( C^k(n+j+2^k)-C^k(n+j)\right) \\
	&~{} = 	\sum_{j = 0}^{2^{k}-1} \left( \dfrac{a(j,k)(n+j+2^k)+b(j,k)}{2^k}-\dfrac{a(j,k)(n+j)+b(j,k)}{2^k}\right) \\
	&~{} = \sum_{j = 0}^{2^k-1} a(j,k)  = (\alpha+1)^k.
	\ea
	\]
The proof is complete.
\end{proof}

\begin{remark}
It is interesting to compare Lemma \ref{lemma: energy conservation} with Theorem 1.2 in \cite{Terras}, that describes the periodicity of the parity vectors. In some sense, \eqref{cons_energy} reflects a weak periodicity, described by the independence on $n$ of the identity, and the invariance by movements of multiples of $2^k$ in the formula.   
\end{remark}
The formula \eqref{cons_energy} has an interesting consequence. Since $j+ 2^k -j =2^k$,
\be\label{pseudo_virial}
\frac1{2^k}\sum_{j = n}^{n+(2^{k}-1)} \left( \frac{C^k(j+ 2^k)-C^k(j)}{2^k}  \right)= \left(\frac{1+\alpha}4\right)^k.
\ee
The left hand side of this identity can be interpreted as the averaged sum of discrete slope quotients of the Collatz iteration $C^k$. The right hand side tell us that this quantity tends to grow with $k$ if $\alpha>3$, and stays bounded in the case $\alpha\leq 3.$ This result is in agreement with Proposition \ref{prop3p7} below.

Now we prove item $(ii)$, estimates \eqref{liminfsup} and \eqref{liminfsup1} in Theorem \ref{MT2}.

\begin{proposition}
	Let $n \in \mathbb{N}$ fixed, and consider $s_k$ and $s_{k,m}$ defined in \eqref{s_k}. Then
	\be\label{lim_n}
	\lim_{n\to +\infty} \dfrac{s_{k,m}(n)}{s_k(n)} =1,
	\ee
	and
	\be\label{infsup}
		\limsup_{k \to \infty} \dfrac{s_{k,m}(n)}{s_k(n)} \leq 1+2\alpha m, \quad \liminf_{k \to \infty} \dfrac{s_{k,m}(n)}{s_k(n)} \geq \begin{cases}  1+\dfrac{2m}{\alpha} \vspace{0.1cm} & \alpha \geq 3\\ 1+\dfrac{2m}{2\beta+1} & \alpha =1. \end{cases}
	\ee
\end{proposition}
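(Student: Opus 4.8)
The plan is to collapse the whole statement onto one exact identity plus one asymptotic quantity. First I would record the key identity
\[
s_{k,m}(n)=s_k(n+m2^k)=s_k(n)+m(\alpha+1)^k .
\]
This is immediate from Lemma \ref{deco_coef}: since $n+j+m2^k\equiv n+j\pmod{2^k}$, the terms $C^k(n+j+m2^k)$ and $C^k(n+j)$ share the same coefficients, so their difference is $m\,a((n+j)\bmod2^k,k)$, and summing over $j$ uses $\sum_i a(i,k)=(\alpha+1)^k$; equivalently it is the $m$-fold iterate of the energy identity \eqref{Energy_conservation}. Hence
\[
\frac{s_{k,m}(n)}{s_k(n)}=1+\frac{m(\alpha+1)^k}{s_k(n)},
\]
and everything reduces to comparing $s_k(n)$ with $(\alpha+1)^k$. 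The limit \eqref{lim_n} is then trivial: for fixed $k$, $C^k(m)\ge m/2^k$ gives $s_k(n)\ge n\to\infty$, so the ratio tends to $1$.

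For the $k\to\infty$ bounds I would introduce the normalized $a$-weighted mean of residues
\[
\mu_k:=\frac{1}{2^k}\cdot\frac{\sum_{i=0}^{2^k-1}a(i,k)\,i}{(\alpha+1)^k}\in[0,1),
\]
and prove the two-sided estimate $\mu_k\le \dfrac{s_k(n)}{(\alpha+1)^k}\le \dfrac{n+2^k-1}{2^k}+\dfrac{B_k}{2^k(\alpha+1)^k}$, where $B_k=\sum_i b(i,k)$. The lower bound follows from $C^k(n+j)\ge a(i,k)(n+j)/2^k\ge a(i,k)\,i/2^k$ (since $\ell\ge \ell\bmod 2^k$) and the bijection $j\mapsto(n+j)\bmod 2^k$; the upper bound from $\ell\le n+2^k-1$ and $\sum_i a(i,k)=(\alpha+1)^k$. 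For $\alpha\ge3$ the closed form \eqref{sum} gives $B_k/(2^k(\alpha+1)^k)\to0$, so $\limsup_k s_k(n)/(\alpha+1)^k\le 1\le \alpha/2$; this alone yields $\liminf_k s_{k,m}/s_k\ge 1+m\ge 1+2m/\alpha$.

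The heart of the matter is the lower bound $\liminf_k\mu_k\ge\frac1{\alpha+1}$, which forces $\liminf_k s_k(n)/(\alpha+1)^k\ge\frac1{\alpha+1}\ge\frac1{2\alpha}$ and hence $\limsup_k s_{k,m}/s_k\le 1+(\alpha+1)m\le 1+2\alpha m$. I would prove the recursion inequality
\[
\mu_{k+1}\ \ge\ \tfrac12\,\mu_k+\tfrac{1}{2(\alpha+1)}.
\]
Writing $V_k=\sum_{i<2^k}a(i,k)\,i$ and splitting $V_{k+1}$ according to whether a residue $i\in[0,2^{k+1})$ lies in $[0,2^k)$ or in $[2^k,2^{k+1})$, the two residues $j$ and $j+2^k$ reducing to the same $j\bmod 2^k$ carry weights $a(j,k)$ and $\alpha a(j,k)$ in some order, by the parity flip \eqref{ter} together with \eqref{coef a}. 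Thus $a(j,k+1)+a(j+2^k,k+1)=(1+\alpha)a(j,k)$, producing the diagonal contribution $(1+\alpha)V_k$, while each displaced weight is $\ge a(j,k)$, so the shift-by-$2^k$ part contributes $\ge 2^k\sum_j a(j,k)=2^k(\alpha+1)^k$; dividing by $2^{k+1}(\alpha+1)^{k+1}$ gives the inequality. Iterating it is a contraction toward the fixed point $\frac1{\alpha+1}$ (geometric tail $\sum_{j\ge0}2^{-j}=2$), so $\mu_k\ge \frac1{\alpha+1}+2^{-(k-1)}(\mu_1-\frac1{\alpha+1})$ and $\liminf_k\mu_k\ge\frac1{\alpha+1}$. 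The case $\alpha=1$ I would handle separately and exactly: all $a(i,k)=1$, whence $s_k(n)=n+\frac{(1+\beta)(2^k-1)}{2}$ and the ratio converges to $1+\frac{2m}{1+\beta}$, which sits inside the claimed interval $[\,1+\frac{2m}{1+2\beta},\,1+2m\,]$.

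The main obstacle is exactly this lower bound on $\mu_k$. The sequence $\mu_k$ need not converge, because the fraction $p_k$ of $a$-weight carried by residues $i$ with $C^k(i)$ odd oscillates with $k$ (one sees this already in the Collatz case, where $p_1,p_2,p_3$ do not settle); this is precisely why only $\liminf$/$\limsup$ can be asserted. The value of the recursion inequality above is that it avoids any need to identify the limiting "Collatz measure" on $[0,1]$: estimating each displaced weight crudely by $a(j,k)$ rather than tracking its exact split between $a(j,k)$ and $\alpha a(j,k)$ is lossy, yet still clears the required constant, since $\frac1{\alpha+1}\ge\frac1{2\alpha}$ for every $\alpha\ge1$.
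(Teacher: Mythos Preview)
Your argument is correct and, in fact, yields strictly sharper constants than the paper's proof, but the route is genuinely different. Both proofs reduce to the identity $s_{k,m}/s_k=1+m(\alpha+1)^k/s_k$ and then sandwich $s_k(n)/(\alpha+1)^k$. The paper does this by the crude pointwise bounds $C^k(n)/2\le C^{k+1}(n)\le \tfrac{\alpha}{2}C^k(n)+\tfrac{\beta}{2}$, which summed over $j$ give linear recursions $s_{k+1}\ge s_k+\tfrac12(1+\alpha)^k$ and $s_{k+1}\le \alpha s_k+\tfrac{\alpha}{2}(1+\alpha)^k+2^k\beta$; solving these yields $1/(2\alpha)\lesssim s_k/(\alpha+1)^k\lesssim \alpha/2$, exactly the constants $2\alpha m$ and $2m/\alpha$ in the statement. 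You instead exploit the affine decomposition of Lemma~\ref{deco_coef} directly: the upper bound $s_k/(\alpha+1)^k\le (n+2^k-1)/2^k+B_k/(2^k(\alpha+1)^k)\to 1$ is immediate, and for the lower bound you introduce the weighted residue mean $\mu_k$ and prove the contraction inequality $\mu_{k+1}\ge\tfrac12\mu_k+\tfrac{1}{2(\alpha+1)}$ from the parity flip \eqref{ter} and \eqref{coef a}, giving $\liminf_k s_k/(\alpha+1)^k\ge 1/(\alpha+1)$. This produces $1+m\le \liminf\le\limsup\le 1+(\alpha+1)m$, sharper than the paper on both sides for $\alpha\ge3$; for $\alpha=1$ your exact computation $s_k(n)=n+\tfrac12(1+\beta)(2^k-1)$ even gives the \emph{limit} $1+2m/(1+\beta)$, not just bounds. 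The trade-off is that the paper's argument needs nothing beyond the one-step bounds on $C$ and is very short, while yours leans more heavily on the structural Lemma~\ref{deco_coef} and the parity-vector periodicity, but extracts more information from them.
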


\begin{proof} 
	Recall that 
	\[
	s_k(n) = \sum_{j = 0}^{2^k-1} C^k(n+j), \q s_{k,m}(n) = \sum_{j = 0}^{2^k-1}C^k(n+j+m2^k).
	\]
	For the sake of simplicity, now we avoid the explicit dependence on $n$ in both sums above. First we need to estimate $s_k$. Notice that from \eqref{C_alpha} we have that for any $n,k \in \mathbb{N}$, $C^{k+1}(n)  \geq \dfrac{C^k(n)}{2}$, and using Lemma \ref{lemma: energy conservation} we obtain
	\[
	\ba
	\sum_{j = 0}^{2^{k+1}-1}C^{k+1}(n+j) \geq&~{}  \dfrac{1}{2}\sum_{j = 0}^{2^{k+1}-1}C^{k}(n+j) \\
	= &~{} \dfrac{1}{2}\sum_{j = 0}^{2^{k}-1}C^{k}(n+j)+\dfrac{1}{2}\sum_{j = 0}^{2^{k}-1}C^{k}(n+j+2^k) \\
	= &~{} \dfrac{1}{2}\sum_{j = 0}^{2^{k}-1}C^{k}(n+j)+\dfrac{1}{2}\sum_{j = 0}^{2^{k}-1}C^{k}(n+j)+\dfrac{(1+\alpha)^k}{2} \\
	= &~{} \sum_{j = 0}^{2^{k}-1}C^{k}(n+j)+\dfrac{(1+\alpha)^k}{2}.
	\ea
	\]
	We then see that $s_{k+1} - s_k \geq \frac{(1+\alpha)^k}{2}$, for all $k=0,1,\ldots$ Adding this inequality from $0$ to a fixed $k$ we obtain 
	\be\label{inf}
	\frac1{s_{k+1}} \leq \frac1{n+\dfrac{1}{2\alpha}\left((1+\alpha)^{k+1}-1\right)}, 
	\ee
	where we have used that $s_0=n$.
	
	To estimate $s_{k}$ from above we use from \eqref{C_alpha} and $\alpha\geq1$ that $C^{k+1}(n) \leq \dfrac{\alpha}{2}C^k(n)+\dfrac{\beta}{2}$. We proceed in the same way as before:
	\begin{align*}
		\sum_{j = 0}^{2^{k+1}-1}C^{k+1}(n+j) & \leq \frac{\alpha}{2}\sum_{j = 0}^{2^{k+1}-1}C^k(n+j)+2^k\beta \\
		& = \dfrac{\alpha}{2}\sum_{j = 0}^{2^{k}-1}C^{k}(n+j)+\dfrac{\alpha}{2}\sum_{j = 0}^{2^{k}-1}C^{k}(n+j+2^k)+2^k\beta \\
		& = \alpha \sum_{j = 0}^{2^k-1}C^k(n+j)+\frac{\alpha}{2}(1+\alpha)^k+2^k\beta.
	\end{align*}
	Thus $s_k$ must satisfy the inequality 
	\[ 
	s_{k+1}-\alpha s_k \leq \frac{\alpha}{2}(1+\alpha)^k+2^k\beta,
	\]
	or equivalently 
	\[ 
	\frac{s_{k+1}}{\alpha^{k+1}}-\frac{s_k}{\alpha^k} \leq \frac{1}{2}\left(\frac{1+\alpha}{\alpha}\right)^k+\left(\frac{2}{\alpha}\right)^k\frac{\beta}{\alpha}.
	\]
	Adding from 0 to $k$ and reordering the resulting expression we obtain 
	\[
	s_{k+1} \leq \left(s_0-\frac{\alpha}{2}+\frac{\beta}{\alpha-2}\right)\alpha^{k+1}+\frac{\alpha}{2}(1+\alpha)^{k+1}-\frac{\beta}{\alpha-2}2^{k+1},
	\]
	or, using that $s_0=n$,
	\be\label{sup}
	\frac1{s_{k+1}} \geq \frac1{\left(n -\frac{\alpha}{2}+\frac{\beta}{\alpha-2}\right)\alpha^{k+1}+\frac{\alpha}{2}(1+\alpha)^{k+1}-\frac{\beta}{\alpha-2}2^{k+1}}.
	\ee
	Having estimated the term $s_k$, we note that applying Lemma \ref{lemma: energy conservation} we have 
	\[ 
	s_{k,m} = s_{k,m-1}+ (1+\alpha)^k= \cdots = s_k+m(1+\alpha)^k,
	\]
	and then
	\be\label{cuota}
	\frac{s_{k,m}}{s_k} = 1+m\frac{(1+\alpha)^k}{s_k}.
	\ee
	Using the estimates \eqref{inf} and \eqref{sup} proved above and taking sup and inf limit we conclude the proof of \eqref{lim_n} and \eqref{infsup}, depending on $\alpha=1$ and $\alpha\geq 3$. Indeed, from \eqref{cuota} and \eqref{inf},
	\[
	 \frac{s_{k,m}}{s_k}  
	 \leq 1+ \frac{m (1+\alpha)^k}{n+\dfrac{1}{2\alpha}\left((1+\alpha)^{k}-1\right)}.
	\]
	From \eqref{cuota} and \eqref{sup},
	\[
	\frac{s_{k,m}}{s_k} = 1+m\frac{(1+\alpha)^k}{s_k} \geq 1+ \frac{m(1+\alpha)^k}{\left(n -\frac{\alpha}{2}+\frac{\beta}{\alpha-2}\right)\alpha^{k}+\frac{\alpha}{2}(1+\alpha)^{k}-\frac{\beta}{\alpha-2}2^{k}}.
	\]
	Passing to the limit in $n$ we get \eqref{lim_n}. Taking liminf and limsup in $k$ we get \eqref{infsup}.
\end{proof}

Combining the energy identity and the previous result we can have some interesting consequences. Indeed, from the last estimates in the proof above
\[
 \frac{s_k}{\left(n -\frac{1}{2}\right)3^{k}+\frac{3}{2}4^{k}- 2^{k}}   \leq \frac1{4^k} s_k \left( \frac{s_{k,1}}{s_k} -1 \right)= 1 \leq  \frac{s_k }{n+\dfrac{1}{6}\left(4^{k}-1\right)}.
\]
This implies the inequalities 
\[
\limsup_{k\to+\infty}  \limsup_{n\to+\infty} \frac{s_k(n)}{3^k n} \leq 1 \quad \hbox{ and } \quad \liminf_{k\to+\infty}  \liminf_{n\to+\infty} \frac{s_k(n)}{ n} \geq 1.
\]

\subsection{Discrete time derivative} 

Let us consider now the discrete derivative of the Collatz orbit as defined in \eqref{DiscreteD}: 
\[
D_t(C^k(n)) = C^{k+1}(n)-C^k(n) = (C-I)(C^k(n)).
\]
Thus, the $m$-th derivative of Collatz orbit at time $k$ is 
\begin{equation}\label{derivada_m}
	D_t^m(C^k(n)) = (C-I)^m(C^k(m)) = \sum_{j = 0}^m \binom{m}{j}(-1)^jC^{m-j}(C^k(n)).
\end{equation}
If we examine the above identity, we note that the first term in the sum is $C^m(C^k(n))$, which, due to Lemma \ref{deco_coef} (applied with $k$ replaced by $m$), can be written as an affine function of $C^k(n)$ depending on the congruence class modulo $2^m$ of $C^k(n)$. The second term in the sum is $mC^{m-1}(C^k(n))$, and again, can be written as an affine function of $C^{k}(n)$, this time depending on the congruence class modulo $2^{m-1}$ of $C^{k}(n)$, but knowing the congruence class modulo $2^{m}$ give us the congruence class modulo $2^{m-1}$. Making the same analysis with all the rest of the terms in the sum reveals us that $D_t^m(C^k(n))$ can be written as an affine function of $C^k(n)$ depending only on its congruence class modulo $2^m$. 

To see an example, we note that for the usual Collatz map, that is, with $\alpha = 3, \beta = 1$, we have $C^2(n)$ can be written as \[ 
C^2(n) = \begin{cases}
	\frac{n}{4} & \text{if }n \equiv 0 \pmod 4 \vspace{0.1cm} \\
	\frac{3n+1}{4} & \text{if }n \equiv 1 \pmod 4 \vspace{0.1cm} \\
	\frac{3n+2}{4} & \text{if }n \equiv 2 \pmod 4 \vspace{0.1cm} \\
	\frac{9n+5}{4} & \text{if }n \equiv 3 \pmod 4,
\end{cases}
\]and then, since $D_t^2(C^k(n)) = (C^2-2C+I)C^k(n)$ we have 
\be\label{example}
D_t^2(C^k(n)) = \begin{cases}
		\frac{C^k(n)}{4} -2\frac{C^k(n)}{2}+C^k(n) & \text{if } C^k(n) \equiv 0 \pmod 4 \vspace{0.1cm} \\
		\frac{3C^k(n)+1}{4} -2\frac{3C^k(n)+1}{2}+C^k(n)& \text{if } C^k(n) \equiv 1 \pmod 4 \vspace{0.1cm} \\
		\frac{3C^k(n)+2}{4}-2\frac{C^k(n)}{2}+C^k(n)  &\text{if } C^k(n) \equiv 2 \pmod 4 \vspace{0.1cm} \\
		\frac{9C^k(n)+5}{4} -2\frac{3C^k(n)+1}{2}+C^k(n)& \text{if } C^k(n) \equiv 3 \pmod 4.
\end{cases}
\ee
We want to make two remarks about the previous equation. The first one is that contribution of the term $-2C(C^k(n))$ appears repeated twice in the intermediate terms in \eqref{example} (as $-2\frac{C^k(n)}{2}$ and as $-2\frac{3C^k(n)+1}{2}$). This is related to the fact that if we consider all the 4 classes of equivalence modulo 4 we can see inside them 2 copies of the classes of equivalence modulo 2. The second one is that, in order to write each case as an affine map of $C^k(n)$ as is done in Lemma \ref{deco_coef}, the coefficients associated to the term $-2C(C^k(n))$ must to be multiplied by 2, in order to have a common denominator for all the terms. 

With this in mind, we have the following result, which ends the proof of Theorem \ref{MT2} $(iii)$.

\begin{proposition}\label{prop3p7}
	Let $m,n,k \in \NN$. Then $D_t^m(C^k(n))$ can be written as an affine function of $C^k(n)$ depending only on its congruence class modulo $2^m$. Moreover, the sum of the coefficients of $D_t^m(C^k(n))$ is $(\alpha-3)^m$ for whose are associated with $C^k(n)$ and $\beta(\alpha-3)^{m-1}$ for the free ones.
\end{proposition}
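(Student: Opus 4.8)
The plan is to expand $(C-I)^m$ by the binomial theorem, apply Lemma~\ref{deco_coef} to each power of $C$, reduce all terms to the common denominator $2^m$, and then recognize the two coefficient sums as binomial expansions that collapse. Writing $N:=C^k(n)$, I would first record
\[
D_t^m(C^k(n))=(C-I)^m(N)=\sum_{j=0}^{m}\binom{m}{j}(-1)^{j}\,C^{m-j}(N).
\]
By Lemma~\ref{deco_coef} (with $k$ replaced by $m-j$), each $C^{m-j}(N)$ is an affine function of $N$ whose coefficients depend only on $N\bmod 2^{m-j}$; since the residue of $N$ modulo $2^m$ fixes its residue modulo every $2^{m-j}$, the entire sum is an affine function of $N$ determined by $N\bmod 2^m$. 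This gives the first assertion.

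Next I would clear denominators. Writing $i_p:=i\bmod 2^p$ for the class $i\in\{0,\dots,2^m-1\}$ and multiplying the $(m-j)$-th summand by $2^{j}/2^{j}$, one obtains $D_t^m(N)=2^{-m}\bigl(A(i,m)\,N+B(i,m)\bigr)$ for $N\equiv i\pmod{2^m}$, with
\[
A(i,m)=\sum_{j=0}^{m}\binom{m}{j}(-1)^{j}2^{j}a(i_{m-j},m-j),\qquad B(i,m)=\sum_{j=0}^{m}\binom{m}{j}(-1)^{j}2^{j}b(i_{m-j},m-j).
\]
These numerators are precisely the ``coefficients'' in the statement, exactly as in the displayed $m=2$, $\alpha=3$ example.

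The crux is the sum over all residue classes. The counting fact I would use is that, as $i$ runs over $\{0,\dots,2^m-1\}$, the reduced residue $i_{m-j}$ attains each value of $\{0,\dots,2^{m-j}-1\}$ exactly $2^{j}$ times; combined with the two summation identities of Lemma~\ref{deco_coef} this yields $\sum_i a(i_{m-j},m-j)=2^{j}(\alpha+1)^{m-j}$ and $\sum_i b(i_{m-j},m-j)=2^{j}\sigma_{m-j}$, where $\sigma_p:=\sum_{i'}b(i',p)$. Substituting the first identity and using $2^j\cdot 2^j=4^j$,
\[
\sum_{i=0}^{2^m-1}A(i,m)=\sum_{j=0}^{m}\binom{m}{j}(-4)^{j}(\alpha+1)^{m-j}=(\alpha+1-4)^{m}=(\alpha-3)^{m},
\]
which is the claimed slope sum. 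For the free terms, substituting $\sigma_p=\beta\bigl((\alpha+1)^p-4^p\bigr)/(\alpha-3)$ (valid for $\alpha\neq3$) gives
\[
\sum_{i=0}^{2^m-1}B(i,m)=\frac{\beta}{\alpha-3}\sum_{j=0}^{m}\binom{m}{j}(-4)^{j}\bigl[(\alpha+1)^{m-j}-4^{m-j}\bigr];
\]
the inner sum equals $(\alpha-3)^{m}-(4-4)^{m}=(\alpha-3)^m$ because $(4-4)^m=0$ for $m\ge1$, so $\sum_i B(i,m)=\beta(\alpha-3)^{m-1}$. The remaining case $\alpha=3$ runs identically with $\sigma_p=\beta p\,4^{p-1}$ and the identity $\sum_{\ell}\binom{m}{\ell}\ell\,x^{\ell}y^{m-\ell}=mx(x+y)^{m-1}$ at $x=4$, $y=-4$, recovering $\beta(\alpha-3)^{m-1}$ under the convention $0^0=1$.

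The only real difficulty is the bookkeeping: one must not confuse the factor $2^j$ produced by reducing to the common denominator with the multiplicity $2^j$ produced by the residue count—their product is exactly the $(-4)^j$ that makes the binomial theorem telescope—and one must treat the degenerate value $\alpha=3$ separately because the closed form for $\sigma_p$ changes there. No analytic estimate is needed; the statement is an exact algebraic identity once the reduction to Lemma~\ref{deco_coef} is in place.
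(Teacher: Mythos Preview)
Your proposal is correct and follows essentially the same route as the paper: binomial expansion of $(C-I)^m$, application of Lemma~\ref{deco_coef} to each $C^{m-j}$, the multiplicity count that each reduced residue appears $2^j$ times, and then the binomial collapse $\sum_j\binom{m}{j}(-4)^j(\alpha+1)^{m-j}=(\alpha-3)^m$. In fact you go further than the paper, which only says ``for the free coefficients we proceed in the same way \ldots\ separating cases'': you actually carry out both the $\alpha\neq 3$ and $\alpha=3$ computations for $\sum_i B(i,m)$, and your handling of the degenerate case via $\sum_\ell\binom{m}{\ell}\ell x^\ell y^{m-\ell}=mx(x+y)^{m-1}$ with the convention $0^0=1$ is clean and correct.
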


\begin{proof}
	First, we replace $C^k(n)$ by $n$, because we are only interested in it congruence modulo $2^m$.  By \eqref{derivada_m},  
	\[
	\ba
	D_t^m(n) = &~{} \sum_{j = 0}^m \binom{m}{j}(-1)^jC^{m-j}(n).
	\ea
	\]
	For $k \in [0,2^m-1]\cap \NN$ and $j \leq m$ we define $k_j \in [0,2^j-1]\cap \NN$ as the unique integer such that $k \equiv k_j \pmod{2^j}$. Note that as $k$ runs from 0 to $2^m-1$, $k_j$ covers all the values between 0 and $2^j-1$, repeating each one $2^{m-j}$ times. In this way, using Lemma \ref{deco_coef}, for $n \equiv k \pmod{2^m}$ we can write $C^{m-j}(n)$ as 
	\begin{align*} 
		C^{m-j}(n) &= \frac{1}{2^{m-j}} \left( a(k_{m-j}, m-j)n+b(k_{m-j},m-j)\right) \\
		&= \frac{2^j}{2^{m}} a(k_{m-j}, m-j) n+ \frac{2^j}{2^m}b(k_{m-j},m-j).
	\end{align*}
	With this, for $n \equiv k \pmod{2^m}$, we can write \eqref{derivada_m} as 
	\[
	\begin{aligned} 
		D_t^m(n) &= \frac{1}{2^m}\sum_{j = 0}^{m}\binom{m}{j}(-1)^j2^ja(k_{m-j},m-j)n \\
		&~{} + \frac{1}{2^m}\sum_{j = 0}^{m}\binom{m}{j}(-1)^j2^jb(k_{m-j},m-j).
	\end{aligned}
	\]
	Thus the coefficient associated with $n$ in the decomposition of $D_t^m(n)$ is \[ 
	\sum_{j = 0}^{m}\binom{m}{j}(-2)^ja(k_{m-j},m-j),
	\]
	and the free one is \[ 
	\sum_{j = 0}^{m}\binom{m}{j}(-2)^jb(k_{m-j},m-j).
	\]Now we compute the sum of those coefficients. Recall that we need to sum over the $2^m$ classes of equivalence of $n$, that is 
	\begin{align*}
		\sum_{k = 0}^{2^m-1}\sum_{j = 0}^{m}\binom{m}{j}(-2)^ja(k_{m-j},m-j) &= \sum_{j = 0}^{m}\sum_{k = 0}^{2^m -1}\binom{m}{j}(-2)^ja(k_{m-j},m-j) \\
		&= \sum_{j = 0}^{m}\binom{m}{j}(-2)^j\sum_{k = 0}^{2^m -1}a(k_{m-j},m-j).
	\end{align*}
	Thanks to Lemma \ref{deco_coef} \eqref{sum} we have 
	\[ 
	\sum_{k = 0}^{2^{m-j}-1}a(k,m-j) = (1+\alpha)^{m-j},
	\]
	and since $k_j$ repeats all the values in $[0,2^{m-j}]\cap \NN$ $2^j$ times we have 
	\[
	\ba
	& \sum_{k = 0}^{2^m-1}\sum_{j = 0}^{m}\binom{m}{j}(-2)^ja(k_{m-j},m-j)= \sum_{j = 0}^{m}\binom{m}{j}(-2)^j2^j(1+\alpha)^{m-j}  \\
	&\quad =2^m \sum_{j = 0}^{m}\binom{m}{j}(-2)^j\left(\frac{1+\alpha}{2}\right)^{m-j} = 2^m\left(\frac{\alpha-3}{2}\right)^m = (\alpha-3)^m.
	\ea
	\]
	For the free coefficients we proceed in the same way as above, using the binomial sum, and using again \eqref{sum} and separating cases. The proof of Theorem \ref{MT2} is complete.
\end{proof}

\end{document}